\allowdisplaybreaks \numberwithin{equation}{section}
\begin{document}
\bibliographystyle{unsrt}
\doublespacing

\newcommand{\ve}[1]{\mbox{\boldmath$#1$}}
\newcommand{\be}{\begin{equation}}
\newcommand{\ee}{\end{equation}}
\newcommand{\bc}{\begin{center}}
\newcommand{\ec}{\end{center}}
\newcommand{\bal}{\begin{align*}}
\newcommand{\enal}{\end{align*}}
\newcommand{\al}{\alpha}
\newcommand{\bt}{\beta}
\newcommand{\gm}{\gamma}
\newcommand{\de}{\delta}
\newcommand{\la}{\lambda}
\newcommand{\om}{\omega}
\newcommand{\Om}{\Omega}
\newcommand{\Gm}{\Gamma}
\newcommand{\De}{\Delta}
\newcommand{\Th}{\Theta}
\newcommand{\nno}{\nonumber}
\newtheorem{theorem}{Theorem}[section]
\newtheorem{lemma}{Lemma}[section]
\newtheorem{assum}{Assumption}[section]
\newtheorem{claim}{Claim}[section]
\newtheorem{proposition}{Proposition}[section]
\newtheorem{corollary}{Corollary}[section]
\newtheorem{definition}{Definition}[section]
\newtheorem{remark}{Remark}[section]
\newenvironment{proof}[1][Proof]{\begin{trivlist}
\item[\hskip \labelsep {\bfseries #1}]}{\end{trivlist}}
\newenvironment{proofclaim}[1][Proof of Claim]{\begin{trivlist}
\item[\hskip \labelsep {\bfseries #1}]}{\end{trivlist}}

\def \qed {\hfill \vrule height7pt width 5pt depth 0pt}
\def\refhg{\hangindent=20pt\hangafter=1}
\def\refmark{\par\vskip 2.50mm\noindent\refhg}
\def\Bbb#1{\mbox{\sf #1}}

\title{\textbf{Data Variation in Coalescence Fractal Interpolation Function}}
\author{Srijanani Anurag Prasad}
\date{}
\maketitle \vspace{-1.5cm}
{\singlespacing \bc
Theoretical Statistics and Mathematics unit,\\
Indian Statistical Institute, Delhi Centre,\\
7, S. J. S. Sansanwal Marg, New Delhi, India. \\ Post Code:110016\\
janani@isid.ac.in
\ec }

\singlespacing

\begin{abstract}
The Iterated Function System(IFS) used in the construction of Coalescence Hidden-variable Fractal Interpolation Function depends on the interpolation data.
In this note, the effect of insertion of data on the related IFS and the Coalescence Hidden-variable Fractal Interpolation Function is studied.
\end{abstract}

{\textbf{ Key Words :} Fractal, Interpolation, Iterated Function System, Coalescence}

{Mathematics Subject Classification: Primary 28A80, 41A05 }
\newpage

\section{\sc Introduction}
The study of fractals became more important since Barnsley~\cite{barnsley86,barnsley88} introduced Fractal Interpolation Function (FIF) using the theory of Iterated Function System (IFS). FIFs provide a very effective tool for interpolation of an experimental data by a non-smooth curve. Later, Barnsley et al.~\cite{barnsley89_2} extended the idea of FIF to produce more flexible interpolation functions of a single real variable called Hidden-variable Fractal Interpolation Function (HFIF). Chand and Kapoor~\cite{chand07} constructed Coalescence Hidden-variable Fractal Interpolation Function (CHFIF) for simulation of curves that exhibit partly self-affine and partly non-self-affine nature.

In~\cite{kocic03}, Kocic discussed the problem of node insertion and knot insertion for Fractal Interpolation Function. The insertion of a new point $(\hat{x},\hat{y})$ in a given set of interpolation data is called the problem of node insertion. A knot $(\hat{x},\hat{y})$ is a node in a given set of interpolation data with a special property that $\hat{y}=f(\hat{x})$, where $f$ is FIF passing through the given set of interpolation data.  In case of CHFIF, more than one type of node insertion exists. There are four types of node insertion: Node-Node insertion, Node-Knot insertion, Knot-Node insertion and Knot-Knot insertion. In this paper, various types of node insertion and the effect of such insertions are studied.

The organization of the paper is as follows: A brief introduction on the construction of a CHFIF is given in Section~\ref{sec:two}. The problem of node insertion and various kinds of node insertion are discussed in Section~\ref{sec:three}. In Section~\ref{sec:four}, a comparative study on the smoothness of CHFIF obtained with a given set of interpolation data and on the smoothness of CHFIF obtained with a node introduced in the set of interpolation data is done. Further the bounds on fractal dimension of both the CHFIFs are also compared.

\section{Construction of CHFIF}\label{sec:two}
Let  $\Lambda = \{(x_i,y_i) \in \mathbb{R}^2 : i=0,1,\ldots,N \}$, where $-\infty <
x_0 < x_1 < \ldots < x_N < \infty$  be the given interpolation data.  A set of real parameters $\{z_i\}$ for $i=0,1,\ldots,N$ is introduced to form the generalized interpolation data  $\Delta =\{ (x_i,y_i,z_i) :i=0,1,\ldots,N \}$.  The interval $[x_0,x_N]$ is denoted by $I$.  For $n=1,2,\ldots,N$  the intervals $[x_{n-1},x_n]$ are denoted by $I_n$.  The contractive homeomorphisms   $\ L_n :I \rightarrow I_n$ for $ n = 1,2,\ldots, N $ are defined by
\begin{align}\label{eq:Ln}
L_n(x) = x_{n-1} + \frac{x_n - x_{n-1}}{x_N-x_0}\ (x - x_0)
\end{align}
For $n=1,2,\ldots, N$, the functions $ F_n : I \times \mathbb{R}^2 \rightarrow
\mathbb{R}^2 $ are defined by,
\begin{align}\label{eq:Fnxyz}
F_n(x,y,z)&=\big(\al_n y + \bt_n z + p_n(x), \gm_n z + q_n(x)\big)
\end{align}
where, $\al_n$  and $\gm_n$ are free variables chosen such that $|\al_n| < 1$ and $|\gm_n| < 1$ and $\bt_n$ are constrained variables chosen such that $|\bt_n| + |\gm_n| < 1$. The functions $p_n$ and $q_n$ are continuous functions on $x$ chosen such that the functions $F_n$ satisfy
\begin{align} \label{eq:Fncond}
F_n(x_0,y_0,z_0)=(y_{n-1},z_{n-1}) \quad \mbox{and} \quad
F_n(x_N,y_N,z_N)=(y_{n},z_{n}).
\end{align}
The above conditions are called join-up conditions. The required IFS is defined using $L_n$ and $F_n$ as
\begin{align}\label{eq:chifs}
\{I \times \mathbb{R}^2; \om_n , n=1,2,\ldots N \}
\end{align}
where,
\begin{align}\label{eq:omn}
\om _n(x,y,z)= \left( L_n(x), F_n(x,y,z)\right).
\end{align}
It has been proved in~\cite{chand07} that the above IFS is hyperbolic with respect to a metric $d^*$ on $\mathbb{R}^3,$ equivalent to the Euclidean metric. For a hyperbolic IFS, we know that there exists a unique non-empty compact set $A \subseteq \mathbb{R}^3$ such that  $ A = \bigcup\limits_{n=1}^N \omega_n(A)$. This set $A$ is called attractor of IFS for the given interpolation data and shown to be graph of a continuous function $f:I \rightarrow \mathbb{R}^2$ such that $f(x_i)=(y_i,z_i)$ for $i=0,1,\ldots, N$. Now,  Coalescence Hidden variable Fractal Interpolation Function (CHFIF) is defined as follows:

\begin{definition}
The \textbf{Coalescence Hidden-variable Fractal Interpolation
Function (CHFIF) } for the given interpolation data $\{(x_i,y_i):i =
0,1,\ldots,N \ \}$ is defined as the continuous function $\ f_1 : I
\rightarrow \mathbb{R}$ where $f_1$ is the first component of the above function $f=(f_1,f_2)$ which is graph of an atrractor.
\end{definition}

A set $S \subset \mathbb{R}^n$ consisting of points $ x = (x_1,x_2,\ldots,x_n)$ is said to be self-affine if $S$ is union of $N$ distinct subsets, each identical with $rS = \{(r_1x_1,r_2x_2,\ldots,r_nx_n) : r = (r_1,r_2,\ldots,r_n),\  r_i > 0 \ \mbox{and} \ x \in S\}$ up to translation and rotation. If $S$ is not self-affine, then it is non-self-affine. In a self-affine set, suppose $r_1 = r_2= \ldots =r_n$. Then, it is called self-similar set, otherwise it is called non-self-similar set.

\begin{remark}
The function $f_1$ is called a CHFIF as it exhibits both self-affine and non-self-affine nature. For the same interpolation data, the function $f_2$ is a self-affine function.
\end{remark}

\begin{remark}
For a given an interpolation data with $N+1$ points, there are $N$ contraction mappings defined in the IFS.
\end{remark}

\section{Node Insertion}\label{sec:three}
Given an interpolation data $\Lambda =\{(x_0,y_0), (x_1,y_1),\ldots ,(x_N,y_N)\}$, the insertion of a new point $(\hat{x},\hat{y})$ in the given interpolation data is called the problem of node insertion. Suppose $x_{k-1} < \hat{x} < x_k$. Then, the new interpolation data is $\{(x_0,y_0), (x_1,y_1),\ldots ,\\(x_{k-1},y_{k-1}), (\hat{x},\hat{y}),(x_{k},y_{k}), \ldots,(x_N,y_N)\}$ consists of $N+2$ points. We need $N+1$ contraction maps to define IFS corresponding to these data.

The interval $I_k=[x_{k-1},x_k]$ is broken into two smaller intervals $I_k^l=[x_{k-1},\hat{x}]$ and $I_k^r=[\hat{x},x_k]$.
Define $L_k^l : I \rightarrow I_k^l$ and $L_k^r : I \rightarrow I_k^r$ as
\begin{align}\label{eq:Lklr}\left.
\begin{array}{rl}
L_k^l(x) &= x_{k-1} + \frac{\hat{x} - x_{k-1}}{x_N-x_0}\ (x - x_0) \\
L_k^r(x) &= \hat{x} + \frac{x_k-\hat{x}}{x_N-x_0}\ (x - x_0)
\end{array} \right\}
\end{align}
Similarly, define $F_k^l : I \times \mathbb{R}^2 \rightarrow \mathbb{R}^2$ and
$F_k^r : I \times \mathbb{R}^2 \rightarrow \mathbb{R}^2$ by
\begin{align}\label{eq:Fklr}\left.
\begin{array}{rl}
F_k^l(x) &= \big(\al_k^l y + \bt_k^l z + p_k^l(x), \gm_k^l z + q_k^l(x)\big)\\
F_k^r(x) &= \big(\al_k^r y + \bt_k^r z + p_k^r(x), \gm_k^r z + q_k^r(x)\big)
\end{array} \right\}
\end{align}
where, $\al_k^l, \ \al_k^r,\ \gm_k^l$ and $\gm_k^r$ are free variables chosen such that $|\al_k^l| < 1$, $|\al_k^r| < 1$, $|\gm_k^l| < 1$ and $|\gm_k^r| < 1$. The variables $|\bt_k^l| $ and $|\bt_k^r| $ are constrained variables chosen such that $|\bt_k^l| + |\gm_k^l| < 1$ and $|\bt_k^r| + |\gm_k^r| < 1$. The functions $p_k^l$, $p_k^r$, $q_k^l$ and $q_k^r$ are continuous functions on $x$ chosen such that the functions $F_k^l$ and $F_k^r$  satisfy
\begin{align} \label{eq:Fklrcond}
\left. \begin{array}{rlrl}
F_k^l(x_0,y_0,z_0)&=(y_{n-1},z_{n-1}) \quad &\mbox{and} \quad
F_k^l(x_N,y_N,z_N)&=(\hat{y},\hat{z})  \\
F_k^r(x_0,y_0,z_0)&=(\hat{y},\hat{z}) \quad &\mbox{and} \quad
F_k^r(x_N,y_N,z_N)&= (y_n,z_n)
\end{array} \right\}
\end{align}
where, $\hat{z}$ is any real parameter such that the generalized interpolation data is \\ $\widehat{\Delta}=\{(x_0,y_0,z_0), (x_1,y_1,z_1),\ldots ,(x_{k-1},y_{k-1},z_{k-1}), (\hat{x},\hat{y},\hat{z}),(x_k,y_k,z_k), \ldots,(x_N,y_N,z_N)\}$.  This is called Node-Node insertion problem. If $\hat{z} = f_2(\hat{x})$ but $\hat{y} \neq f_1(\hat{x})$, then it is called Node-Knot insertion problem. If $\hat{y} = f_1(\hat{x})$ but $\hat{z} \neq f_2(\hat{x})$, then it is called Knot-Node insertion problem. And if $\hat{z} = f_2(\hat{x})$ and $\hat{y} = f_1(\hat{x})$, then it is called Knot-Knot insertion problem. The Node-Knot, Knot-Node and Knot-Knot insertion problem are special cases of Node-Node insertion problem. Hence, the following theorems hold for all cases.

\begin{theorem}
Let $\widehat{\Lambda} = \Lambda \bigcup (\hat{x},\hat{y})$ such that $x_{k-1} < \hat{x} < x_k$. Then, \begin{align}\label{eq:newchifs}
\{I \times \mathbb{R}^2; \om_n , n=1,2,\ldots N, n\neq k, \om_k^l,\om_k^r \}
\end{align}
with $\om_k^l =(L_k^l,F_k^l)$ and $\om_k^r =(L_k^r,F_k^r)$  is a hyperbolic IFS on  $I \times \mathbb{R}^2$ and there exists an attractor $\hat{A}$ satisfying $\hat{A} = \bigcup\limits_{\substack{n=1 \\ n\neq k}}^N \omega_n(\hat{A})\bigcup\omega_k^l(\hat{A})\bigcup\omega_k^r(\hat{A})$.
\end{theorem}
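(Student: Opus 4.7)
The plan is to show that the new collection of maps $\{\omega_n: n\neq k\}\cup\{\omega_k^l,\omega_k^r\}$ is hyperbolic on $I\times\mathbb{R}^2$ by producing a metric equivalent to the Euclidean metric with respect to which every map is a contraction; the existence and uniqueness of the attractor $\hat{A}$ will then follow from the Hutchinson--Banach fixed point theorem for hyperbolic IFS, as invoked already in Section~\ref{sec:two}. Since the maps $\omega_n$ for $n\neq k$ are unchanged from the original hyperbolic IFS (\ref{eq:chifs}) and the new maps $\omega_k^l,\omega_k^r$ have precisely the same algebraic form as $\omega_k$, the strategy is to imitate the construction in~\cite{chand07} for this slightly larger family.

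First I would check that each $L_n$ ($n\neq k$), together with $L_k^l$ and $L_k^r$, is a contractive homeomorphism on $I$: the contractivity ratios are $\frac{x_n-x_{n-1}}{x_N-x_0}$, $\frac{\hat{x}-x_{k-1}}{x_N-x_0}$ and $\frac{x_k-\hat{x}}{x_N-x_0}$ respectively, all strictly less than one since $x_{k-1}<\hat{x}<x_k$. Next, for the vertical components I would note that in $F_n$, $F_k^l$, $F_k^r$ the parameters obey $|\alpha|<1$, $|\gamma|<1$ and $|\beta|+|\gamma|<1$, which is exactly the constraint used in~\cite{chand07} to make the matrix $\begin{pmatrix}\alpha & \beta\\ 0 & \gamma\end{pmatrix}$ a contraction in a suitably weighted norm on $\mathbb{R}^2$.

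I would then introduce on $\mathbb{R}^3$ a metric of the form
\begin{equation*}
\hat{d}^{\,*}\bigl((x,y,z),(x',y',z')\bigr) = |x-x'| + \theta\,|y-y'| + \phi\,|z-z'|,
\end{equation*}
where $\theta,\phi>0$ are chosen large enough so that, for every map in the new family, the induced Lipschitz constant in $\hat{d}^{\,*}$ is strictly less than one. Concretely, $\phi$ is chosen to absorb the Lipschitz constant of $q_n,q_k^l,q_k^r$ in the third coordinate, and then $\theta$ is chosen, depending on $\phi$ and on $\max\{|\alpha|,|\beta|+|\gamma|\}$ taken over all $N+1$ indices, to absorb the cross-terms produced by $\alpha_n y+\beta_n z+p_n(x)$ (and likewise for $k^l,k^r$). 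Such a choice is possible because the set of maps is finite and every individual bound is strictly below $1$; this is the same recipe used in~\cite{chand07}, enlarged by the two extra maps. Since $\theta,\phi$ are fixed positive constants, $\hat{d}^{\,*}$ is equivalent to the Euclidean metric.

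With each $\omega_n$, $\omega_k^l$, $\omega_k^r$ shown to be a contraction with respect to $\hat{d}^{\,*}$, the IFS in~(\ref{eq:newchifs}) is hyperbolic, and Hutchinson's theorem yields a unique non-empty compact attractor $\hat{A}\subseteq I\times\mathbb{R}^2$ satisfying the self-referential identity $\hat{A}=\bigcup_{n\neq k}\omega_n(\hat{A})\cup\omega_k^l(\hat{A})\cup\omega_k^r(\hat{A})$. The only delicate step is the simultaneous choice of the weights $\theta,\phi$ that makes the two newly inserted maps contract along with all the originals, but since the new contractivity parameters obey the same inequalities as in the original construction, this is a routine adaptation rather than a genuine obstacle.
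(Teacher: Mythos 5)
Your proposal takes essentially the same route as the paper: the paper's own proof simply observes that either the new parameters are dominated by the old ones (so the old metric still works) or ``a metric could be defined as in~\cite{chand07}'' for the enlarged family, and then invokes the standard attractor theorem. You skip the case split and go directly to constructing the metric for all $N+1$ maps at once, which is cleaner and in fact subsumes the paper's first case; you are also considerably more explicit than the paper about why the construction of~\cite{chand07} extends, namely that the family is finite and every individual bound ($|\al|<1$, $|\bt|+|\gm|<1$, contraction ratios of $L_k^l,L_k^r$ strictly below one because $x_{k-1}<\hat{x}<x_k$) is strict. One concrete slip: with the normalization $\hat{d}^{\,*}=|x-x'|+\theta|y-y'|+\phi|z-z'|$ that you wrote, the weights $\theta,\phi$ must be chosen \emph{small}, not large. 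The Lipschitz constants of $p_n,q_n,p_k^{l},\ldots$ enter the estimate as $\theta L_{p}|x-x'|$ and $\phi L_{q}|x-x'|$ added to the contraction ratio of $L_n$ in the first coordinate, so large weights destroy the contraction there; one also needs $\theta\le\phi$ (or similar) so that the cross-term $\theta|\bt_n||z-z'|$ is absorbed by $\phi$ using $|\bt_n|+|\gm_n|<1$. With that sign of the choice corrected, your argument is a faithful, more detailed version of what the paper does.
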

\begin{proof}
Suppose $\al_k^l \leq \al_k$, $\gm_k^l \leq \gm_k$, $\bt_k^l \leq \bt_k$, $\al_k^r \leq \al_r$, $\gm_k^r \leq \gm_k$ and $\bt_k^r \leq \bt_k$.
 Then the maps $\om_k^l$ and $\om_k^r$ are contraction maps with respect to same metric by which $\om_n$ are contraction maps. Otherwise, a metric could be defined as in~\cite{chand07} such that $\om_n , n=1,2,\ldots N, n\neq k, \om_k^l,\om_k^r$ are contraction maps. Therefore, the IFS given by~\eqref{eq:newchifs}is hyperbolic and has an attractor $\hat{A}$ satisfying $\hat{A} = \bigcup\limits_{\substack{n=1 \\ n\neq k}}^N \omega_n(\hat{A})\bigcup\omega_k^l(\hat{A})\bigcup\omega_k^r(\hat{A})$.
\qed \end{proof}

\begin{theorem}
The attractor of the IFS given by~\eqref{eq:newchifs} is graph of a continuous function passing through the generalized interpolation points $\widehat{\Delta} = \Delta \bigcup (\hat{x},\hat{y},\hat{z})$.
\end{theorem}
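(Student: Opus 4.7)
The plan is to mimic the Read--Bajraktarevi\'c fixed-point argument used for the original CHFIF in~\cite{chand07}, adapted to the refined partition that now has a break at $\hat{x}$. Let $\mathcal{F}$ denote the set of continuous functions $g=(g_1,g_2):I\to\mathbb{R}^2$ satisfying $g(x_i)=(y_i,z_i)$ for $i=0,1,\ldots,N$ and $g(\hat{x})=(\hat{y},\hat{z})$. Equipped with the supremum metric $\rho(g,h)=\sup_{x\in I}\|g(x)-h(x)\|$, the set $\mathcal{F}$ is a closed subset of a Banach space and is therefore complete.

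Next I would define a Read--Bajraktarevi\'c operator $T:\mathcal{F}\to\mathcal{F}$ piecewise by
\begin{align*}
(Tg)(x)=\begin{cases} F_n\bigl(L_n^{-1}(x),g(L_n^{-1}(x))\bigr), & x\in I_n,\ n\neq k,\\
F_k^l\bigl((L_k^l)^{-1}(x),g((L_k^l)^{-1}(x))\bigr), & x\in I_k^l,\\
F_k^r\bigl((L_k^r)^{-1}(x),g((L_k^r)^{-1}(x))\bigr), & x\in I_k^r.
\end{cases}
\end{align*}
The first task is to check that $T$ genuinely sends $\mathcal{F}$ into itself. Continuity on each subinterval is immediate from the continuity of $F_n$, $F_k^l$, $F_k^r$ and the linear homeomorphisms. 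Continuity at the internal breakpoints $x_n$ for $n\neq k-1,k$ follows from the original join-up conditions~\eqref{eq:Fncond}, and the crucial new checks at $x_{k-1}$, $\hat{x}$, and $x_k$ follow directly from~\eqref{eq:Fklrcond}: evaluating the left and right limits at $\hat{x}$ gives $F_k^l(x_N,y_N,z_N)=(\hat{y},\hat{z})=F_k^r(x_0,y_0,z_0)$, and similarly at $x_{k-1}$ and $x_k$ one matches $\omega_{k-1}$ with $\omega_k^l$ and $\omega_k^r$ with $\omega_{k+1}$. The interpolation conditions $(Tg)(x_i)=(y_i,z_i)$ and $(Tg)(\hat{x})=(\hat{y},\hat{z})$ then hold for any $g\in\mathcal{F}$, so $Tg\in\mathcal{F}$.

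The second task is the contraction estimate. For $g,h\in\mathcal{F}$ and $x\in I_n$ with $n\neq k$, the difference $(Tg-Th)(x)$ equals $\bigl(\alpha_n(g_1-h_1)+\beta_n(g_2-h_2),\ \gamma_n(g_2-h_2)\bigr)$ evaluated at $L_n^{-1}(x)$, and analogously on $I_k^l$ and $I_k^r$ with the parameters $(\alpha_k^l,\beta_k^l,\gamma_k^l)$ and $(\alpha_k^r,\beta_k^r,\gamma_k^r)$. Using the metric $d^*$ on $\mathbb{R}^2$ built in~\cite{chand07} (which is precisely designed so that the linear part of each $F_n$ becomes a contraction under the bounds $|\alpha|<1$, $|\beta|+|\gamma|<1$), one obtains $\rho^*(Tg,Th)\le s\,\rho^*(g,h)$ for some $s<1$, where $\rho^*$ is the sup metric induced by $d^*$. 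By Banach's theorem, $T$ has a unique fixed point $f\in\mathcal{F}$.

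Finally, I would identify the graph $G(f)=\{(x,f(x)):x\in I\}$ with the attractor $\hat{A}$. The fixed-point equation $Tf=f$ rewrites, on each subinterval, as $f(L(x))=F(x,f(x))$, i.e.\ $\omega(x,f(x))=(L(x),f(L(x)))\in G(f)$ for each of the maps $\omega_n$ ($n\neq k$), $\omega_k^l$, $\omega_k^r$. Therefore $G(f)$ is invariant under the set-valued Hutchinson operator of the IFS~\eqref{eq:newchifs}, and since $G(f)$ is non-empty and compact, uniqueness of the attractor of the hyperbolic IFS established in the previous theorem forces $G(f)=\hat{A}$. Because $f$ lies in $\mathcal{F}$, it is continuous and passes through every point of $\widehat{\Delta}$, which is exactly the claim. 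The main obstacle I anticipate is purely bookkeeping --- verifying the three new continuity matchings at $x_{k-1}$, $\hat{x}$, and $x_k$ from~\eqref{eq:Fklrcond} --- rather than anything conceptual, since the contraction estimate transfers verbatim from~\cite{chand07} once the parameter bounds $|\beta_k^l|+|\gamma_k^l|<1$ and $|\beta_k^r|+|\gamma_k^r|<1$ are in place.
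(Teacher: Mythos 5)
Your proposal is correct and follows essentially the same route as the paper: both define a Read--Bajraktarevi\'c operator piecewise on the refined partition (using $F_n$ on $I_n$ for $n\neq k$ and $F_k^l$, $F_k^r$ on $I_k^l$, $I_k^r$), invoke a contraction/fixed-point argument in the spirit of the hidden-variable FIF construction, and identify the graph of the fixed point with the attractor $\hat{A}$ by uniqueness. Your write-up is in fact more complete than the paper's, which defers the contraction estimate and the continuity matchings at $x_{k-1}$, $\hat{x}$, $x_k$ to the earlier literature (and contains a typographical duplication of the $I_k^l$ case where the $I_k^r$ case should appear).
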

\begin{proof}
Consider the metric space of continuous functions $ (\mathcal{G}, d_{\mathcal{G}})$ such that $ \mathcal{G} = \{ g: \  g : I \rightarrow \mathbb{R}^2 \ \mbox{is continuous},
g(x_0) = (y_0,z_0) \ \mbox{and} \ g(x_N) = (y_N,z_N) \}$ and $ d_{\mathcal{G}} (g,\hat{g}) = \max\limits_{x \in I }( |g_1(x) - \hat{g}_1(x)|, |g_2(x)-\hat{g}_2(x)|)$,   $g,\hat{g} \in \mathcal{G}$. Define Read-Bajraktarevic operator on the above space to construct CHFIF passing through the interpolation data $\widehat{\Lambda}$ as
\begin{align*}
\left. \begin{array}{rl}
\hat{T}(g)(x)&= F_n(L_n^{-1}(x),g(L_n^{-1}(x))),   \ x \in I_n, n \neq k \\
\hat{T}(g)(x)&= F_k^l(L_k^{l\ -1}(x),g(L_k^{l\ -1}(x))), \ x \in I_k^l \\
\hat{T}(g)(x)&= F_k^l(L_k^{l\ -1}(x),g(L_k^{l\ -1}(x))), \ x \in I_k^l
\end{array} \right \}
\end{align*}

Following the lines of proof as in~\cite{barnsley89_2}, it can be easily shown that the Read-Bajraktarevic operator is a contraction map and there exist a continuous function $\hat{f}: I \rightarrow \mathbb{R}^2$ passing through the generalized interpolation points $\widehat{\Delta}$. Also, uniqueness gives that $\hat{A}$ is graph of the function $\hat{f}$.
\qed \end{proof}

\begin{remark}
Like earlier, the function $\hat{f}$ is expressed component wise as $\hat{f}=(\hat{f}_1,\hat{f}_2)$. Then, $\hat{f}_1$ is a CHFIF passing through the interpolation data $\widehat{\Lambda}$.
\end{remark}

\begin{remark}
Denote $\rho_x = \frac{\hat{x}-x_{k-1}}{x_k-x_{k-1}}$. The contractive homeomorphisms $L_k^l$ and $L_k^r$ could be expressed using the contractive homeomorphism $L_k$ as follows:
\begin{align*}
L_k^l(x)&= \rho_x \ L_k(x) + (1-\rho_x) x_{k-1} \\
L_k^r(x)&= (1-\rho_x) \ L_k(x) + \rho_x x_k
\end{align*}
\end{remark}

\begin{remark}
 Suppose $\hat{z} = f_2(\hat{x})$ but $\hat{y} \neq f_1(\hat{x})$. Since $(\hat{x},\hat{z})$ is a knot on the interpolation data $\{(x_i,z_i)i=0,1,\ldots,N\}$, it is shown in~\cite{kocic03} that $\hat{f}_2=f_2$. Hence, it is called Node-Knot insertion problem.
\end{remark}

\begin{remark}
 In case of Knot-Node insertion, since $\hat{f}_1$ depends on $\hat{f}_2$,  $\hat{f}_1(x) \neq f_1(x) $ if $x \neq x_i, i=0,1,\ldots,N$ and $x \neq \hat{x}$.
\end{remark}

Replacing $Y^0$ in~\cite{kocic03} by $\Delta^0=\Delta =\{ (x_i,y_i,z_i) :i=0,1,\ldots,N \}$ and $Y^j$ by $\Delta^j=\bigcup\limits_{i(j) \in \sum_j} \om_{i(j)}(\Delta^0)$, where $i(j)=(i_1,i_2,\ldots,i_j)$ are finite codes of length $j$ and is an element of the set $\sum_{j} = \{1,2,\ldots,N\}^{1,\ldots,j}$, it is clear that $f:I \rightarrow \mathbb{R}^2$ which is a fixed point of the Read-Bajraktarevic opearator $T(g)=F_{n}(L_{n}^{-1}(\cdot), g(L_{n}^{-1}(\cdot)))$ defined on the space $C^*(\Delta)=\{g: g:I \rightarrow \mathbb{R}^2 \ \mbox{such that}\ g(x_i)=(y_i,z_i) \}$ also interpolates $\Delta^j$. Consider the set $\Lambda^j$ which consists of points $(x_n,y_n)$ if $(x_n,y_n,z_n) \in \Delta^j$. Then, $f_1:I \rightarrow \mathbb{R}$  interpolates $\Lambda^j$. Similar to Lemma $3.1$ and $3.2$ of~\cite{kocic03}, we have the following proposition:

\begin{proposition}
(1): $\Delta^{j-1} \subset \Delta^j$ \\
(2): The IFSs $\{\mathbb{R}^3, \om_i; i=1,2,\ldots,N\}$ and $\{\mathbb{R}^3, \om_{i(j)}; i=1,2,\ldots,N^j\}$, $j\geq 2$, have the same attractor $A$.
\end{proposition}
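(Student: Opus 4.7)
My plan is to prove the two parts in sequence: part~(1) by induction on $j$, and part~(2) by identifying $A$ as a fixed point of the Hutchinson operator of the refined IFS and invoking uniqueness.

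For part~(1), I would first rewrite $\Delta^j$ in a form that exposes its recursive structure. Using associativity of composition of the maps $\om_{i_1}\circ\cdots\circ\om_{i_j}$, I would argue
\[
\Delta^j \;=\; \bigcup_{(i_1,\ldots,i_j) \in \sum_j} \om_{i_1}\circ\cdots\circ\om_{i_j}(\Delta^0) \;=\; \bigcup_{n=1}^N \om_n\!\left(\Delta^{j-1}\right),
\]
reducing the claim to induction on $j$ with base case $\Delta^0 \subset \Delta^1$. For the base case I would invoke the join-up conditions~\eqref{eq:Fncond} together with~\eqref{eq:Ln} and~\eqref{eq:omn}, which give $\om_n(x_0,y_0,z_0)=(x_{n-1},y_{n-1},z_{n-1})$ and $\om_n(x_N,y_N,z_N)=(x_n,y_n,z_n)$; applying each $\om_n$ to the two endpoints of $\Delta^0$ therefore recovers every interpolation point $(x_m,y_m,z_m)$. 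The inductive step is then immediate: if $\Delta^{j-2}\subset\Delta^{j-1}$, applying $\om_n$ preserves the inclusion, and taking the union over $n$ gives $\Delta^{j-1}\subset\Delta^j$.

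For part~(2), my approach is the standard ``refined IFS'' argument. First I would note that $\{\mathbb{R}^3,\om_{i(j)}\}$ is itself hyperbolic with respect to the metric $d^*$ of~\cite{chand07}, because a composition of $j$ contractions is a contraction (with contractivity bounded by the product of the individual factors); hence it has a unique attractor. Letting $A$ denote the attractor of the original IFS, I would start from the self-referential identity $A=\bigcup_{n=1}^N \om_n(A)$ and iterate, substituting the same identity into each occurrence of $A$ on the right-hand side $j-1$ further times, to obtain
\[
A \;=\; \bigcup_{(i_1,\ldots,i_j)\in \sum_j} \om_{i_1}\circ\cdots\circ\om_{i_j}(A) \;=\; \bigcup_{i(j)\in \sum_j}\om_{i(j)}(A).
\]
Thus $A$ is invariant under the Hutchinson operator of the refined IFS, and by uniqueness of attractors of hyperbolic IFSs, the refined IFS has $A$ as its attractor as well.

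I do not expect a genuine obstacle here: the only bookkeeping subtlety is to be careful about associativity when swapping the ``outer'' index $i_1$ against the remaining tuple $(i_2,\ldots,i_j)$ in the union decomposition, and to check that the contractivity factors multiply under composition in the metric $d^*$. Everything else rests on the join-up conditions and the contraction-mapping principle already established in Section~\ref{sec:two}.
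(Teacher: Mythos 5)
Your proposal is correct and matches what the paper intends: the paper offers no written argument of its own, simply deferring to Lemmas 3.1 and 3.2 of~\cite{kocic03}, and your induction for part~(1) (base case from the join-up conditions~\eqref{eq:Fncond}, inductive step via the decomposition $\Delta^j=\bigcup_n \om_n(\Delta^{j-1})$) together with the iterate-and-invoke-uniqueness argument for part~(2) is exactly the standard proof that citation stands for. The only cosmetic discrepancy is the composition order: the paper's convention (visible in the proof of its Theorem 3.3, where $F_{i_1i_2}(x,\cdot)=F_{i_2}(L_{i_1}(x),F_{i_1}(x,\cdot))$) makes $\om_{i(j)}=\om_{i_j}\circ\cdots\circ\om_{i_1}$ rather than your $\om_{i_1}\circ\cdots\circ\om_{i_j}$, but since both parts only use unions over all tuples this changes nothing.
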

\begin{proof}
Proof is similar to Lemma $3.1$ and $3.2$ in~\cite{kocic03} if we replace $Y^j$ by $\Delta^j$ and use $\om_i$ defined by~\eqref{eq:omn}.
\qed \end{proof}

Define $C^*(\Delta^j)=\{g: g:I \rightarrow \mathbb{R}^2 \ \mbox{which interpolates} \Delta^j\}$ and the Read-Bajraktarevic operator on the space  as
\begin{align*}
T_j(g)(x)=F_{i(j)}(L_{i(j)}^{-1}(x), g(L_{i(j)}^{-1}(x)))\ x \in [L_{i(j)}(x_0),L_{i(j)}(x_N)].
\end{align*}
It is clear that the $T_j$ is a contraction map on $C^*(\Delta^j)$ with respect to maximum metric. Using the above proposition, the fixed point of $T_j$ say $f^j$ interpolates $\Delta^n$ for all $n < j$.

\begin{theorem}
Given an interpolation data $\Lambda =\{(x_0,y_0), (x_1,y_1),\ldots ,(x_N,y_N)\}$, let $(\hat{x},\hat{y}, \hat{z})$ be a knot in the generalized interpolation data $\Delta=\{(x_0,y_0,z_0), (x_1,y_1,z_1),\ldots ,\\(x_N,y_N,z_N)\}$ such that $\hat{y}=f_1(\hat{x})$ and $\hat{z}=f_2(\hat{z})$. Then,
\begin{align*}
f_1(L_{i(2)}(x))&=\al_{i_2} (\al_{i_1} f_1(x)+\bt{i_1}f_2(x)+p_{i_1}(x)) + \bt_{i_2} (\gm_{i_1} f_2(x)+q_{i_1}(x)) + p_{i_2}(L_{i_1}(x)) \\
f_2(L_{i(2)}(x))&= \gm_{i_2}( \gm_{i_1} f_2(x)+q_{i_1}(x))+q_{i_2}(L_{i_1}(x))
\end{align*}
where, $i(2)=(i_1,i_2) \in \sum_2$.
\end{theorem}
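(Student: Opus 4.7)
The plan is to derive the stated level-$2$ identity by iterating the level-$1$ functional equation that $f=(f_1,f_2)$ satisfies as the graph of the attractor $A=\bigcup_{n=1}^{N}\om_n(A)$. That functional equation reads $f(L_n(x))=F_n(x,f(x))$ for $x\in I$ and $n=1,\ldots,N$, or, in components using~\eqref{eq:Fnxyz},
\[
f_1(L_n(x))=\al_n f_1(x)+\bt_n f_2(x)+p_n(x),\qquad f_2(L_n(x))=\gm_n f_2(x)+q_n(x).
\]

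First I would use the knot hypothesis $\hat y=f_1(\hat x)$, $\hat z=f_2(\hat x)$ to justify iterating this equation in the level-$2$ indexing. Because $(\hat x,\hat y,\hat z)$ already lies on the graph of $f$, the function $f$ also interpolates the enlarged generalized data $\widehat{\Delta}$; by uniqueness of the Read--Bajraktarevic fixed point this forces $\hat f=f$, and the preceding Proposition tells us that the level-$2$ IFS $\{\mathbb{R}^3,\om_{i(2)}:i(2)\in\sum_2\}$ has the same attractor $A$. Hence $f$ satisfies the functional equation associated with every code $i(2)$.

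Then I would perform the two-step substitution. Using $L_{i(2)}=L_{i_2}\circ L_{i_1}$, apply the componentwise level-$1$ identities with $n=i_2$ at the point $L_{i_1}(x)\in I$ to express $f_j(L_{i(2)}(x))$ in terms of $f_1(L_{i_1}(x))$, $f_2(L_{i_1}(x))$ and the inhomogeneous terms $p_{i_2}(L_{i_1}(x))$, $q_{i_2}(L_{i_1}(x))$. A second application with $n=i_1$ at the point $x$ replaces $f_j(L_{i_1}(x))$ by the corresponding expressions in $f_1(x)$ and $f_2(x)$. Distributing $\al_{i_2}$ and $\bt_{i_2}$, and leaving the terms $p_{i_2}(L_{i_1}(x))$ and $q_{i_2}(L_{i_1}(x))$ intact, reproduces the two formulas in the statement verbatim. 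The only step that requires any care is the first -- linking the knot hypothesis to $\hat f=f$ so that one is truly iterating the same functional equation -- after which the rest is a mechanical algebraic expansion with no analytic obstacle.
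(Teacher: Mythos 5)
Your proposal is correct and matches the paper's argument in substance: both reduce the claim to the composition identity $f(L_{i_2}(L_{i_1}(x)))=F_{i_2}(L_{i_1}(x),F_{i_1}(x,f(x)))$ obtained by applying the level-$1$ functional equation twice, followed by a componentwise expansion using~\eqref{eq:Fnxyz}. The only cosmetic difference is that the paper packages the iteration as showing $f$ is the unique fixed point of the level-$2$ Read--Bajraktarevic operator $T_2$ on $C^*(\Delta^2)$ before expanding, whereas you substitute directly; neither route actually uses the knot hypothesis for the displayed identities.
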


\begin{proof}
Consider $T_2(g)(x)=F_{i(2)}(L_{i(2)}^{-1}(x), g(L_{i(2)}^{-1}(x))),\ \ i(2)=(i_1,i_2) \in \sum_2 $  for $x \in [L_{i(2)}(x_0),L_{i(2)}(x_N)]= [L_{i_2}(L_{i_1}(x_0)),L_{i_2}(L_{i_1}(x_N))]$. Then, $T_2$ is a contraction map on $C^*(\Delta^2)$ with respect to maximum metric and  $h$ be the fixed point of $T_2$. Since $F_{i(2)}(x,h(x))=F_{i_1i_2}(x,h(x))= F_{i_2}(L_{i_1}(x),F_{i_1}(x,h(x)))$, it is seen that,
\begin{align*}
h(x)&= F_{i(2)}(L_{i(2)}^{-1}(x), h(L_{i(2)}^{-1}(x))) \nno \\ &=F_{i_1i_2}(L_{i_1}^{-1}(L_{i_2}^{-1}(x)), h(L_{i_1}^{-1}(L_{i_2}^{-1}(x)))) \nno \\
&=F_{i_2}(L_{i_1}(L_{i_1}^{-1}(L_{i_2}^{-1}(x))), F_{i_1}(L_{i_1}^{-1}(L_{i_2}^{-1}(x)),h(L_{i_1}^{-1}(L_{i_2}^{-1}(x))))) \nno \\
&=F_{i_2}(L_{i_2}^{-1}(x), F_{i_1}(L_{i_1}^{-1}(L_{i_2}^{-1}(x)),h(L_{i_1}^{-1}(L_{i_2}^{-1}(x))))).
\end{align*}
Also, we know that $f$ satisfies the functional equation $f(x)=F_i(L_{i}^{-1}(x), f(L_{i}^{-1}(x)))\ $ for $\ x \in [L_{i}(x_0),L_{i}(x_N)], \ i \in \{1,\ldots,N\}$. Hence,
\begin{align*}
f(x)= F_{i_2}(L_{i_2}^{-1}(x), f(L_{i_2}^{-1}(x))) = F_{i_2}(L_{i_2}^{-1}(x), F_{i_1}(L_{i_1}^{-1}(L_{i_2}^{-1}(x)),f(L_{i_1}^{-1}(L_{i_2}^{-1}(x)))))
\end{align*}
which implies $f$ is a fixed point of $T_2$. By uniqueness, we have $f=h$.

\newpage
\noindent Now, $f(x)=(f_1(x),f_2(x))$ in the above equation, we have
\begin{align*}
(f_1(L_{i_2}& (L_{i_1}(x))), f_2(L_{i_2}(L_{i_1}(x)))) \\ &=F_{i_2}(L_{i_1}(x),F_{i_1}(x,f_1(x),f_2(x)))\\
&=F_{i_2}(L_{i_1}(x),\al_{i_1}f_1(x)+\bt{i_1}f_2(x)+p_{i_1}(x), \gm_{i_1} f_2(x)+q_{i_1}(x))\\
&=\bigg(\al_{i_2} (\al_{i_1}f_1(x)+\bt{i_1}f_2(x)+p_{i_1}(x)) + \bt_{i_2} (\gm_{i_1} f_2(x)+q_{i_1}(x)) + p_{i_2}(L_{i_1}(x)), \\ & \quad \gm_{i_2}( \gm_{i_1} f_2(x)+q_{i_1}(x))+q_{i_2}(L_{i_1}(x)\bigg)
\end{align*}
which implies $f_1(L_{i(2)}(x))=\al_{i_2} (\al_{i_1}f_1(x)+\bt_{i_1}f_2(x)+p_{i_1}(x)) + \bt_{i_2} (\gm_{i_1} f_2(x)+q_{i_1}(x)) + p_{i_2}(L_{i_1}(x))$  and $f_2(L_{i(2)}(x)) = \gm_{i_2}( \gm_{i_1} f_2(x)+q_{i_1}(x))+q_{i_2}(L_{i_1}(x))$.
\qed \end{proof}

\section{Smoothness}\label{sec:four}
In this section, we compare the Lipschitz exponent of  the functions $\hat{f}_1$ passing through the interpolation data $\widehat{\Lambda}$ and $f_1$ passing through the interpolation data $\Lambda$. Through out this section, we assume $0=x_0 < x_1 < \ldots < x_N=1$.

A function $f:\mathbb{R} \rightarrow \mathbb{R}$ is said to be Lipschitz function of order $\delta$ if $|f(x)-f(\bar{x})| \leq K |x-\bar{x}|^{\delta}$, where $K$ is any positive constant and $0< \delta \leq 1$. The Modulus of continuity of a function $f$ is given by $\om(f; t) = \sup\limits_{|h|<t} \sup\limits_x |f(x + h) - f(x)|$.

Consider the special case when the functions $p_n$, $q_n$, $p_k^l$, $p_k^r$, $q_k^l$ and $q_k^r$ are linear functions. Denoting $\rho_y = \frac{\hat{y}-y_{k-1}}{y_k-y_{k-1}}$ and $\rho_z =\frac{\hat{z}-z_{k-1}}{z_k-z_{k-1}}$, it is observed that,
\begin{align}\label{eq:rel}
q_k^l(x) &= \rho_z\ q_k(x) + (1-\rho_z)\ z_{k-1} + (\rho_z\ \gm_k - \gm_k^l)\left(\frac{z_N(x-x_0)+z_0(x_N-x)}{(x_N-x_0)}\right) \nno \\
q_k^r(x) &= (1-\rho_z)\ q_k(x) + \rho_z\ z_k + [(1-\rho_z)\ \gm_k - \gm_k^r]\left(\frac{z_N(x-x_0)+z_0(x_N-x)}{(x_N-x_0)}\right) \nno \\
p_k^l(x) &= \rho_y\ p_k(x) + (1-\rho_y)\ y_{k-1} \nno \\ & \quad \mbox{}+ [\rho_y \al_k - \al_k^l] \left(\frac{y_N(x-x_0)+y_0(x_N-x)}{(x_N-x_0)}\right) \nno \\ & \quad \mbox{}+[\rho_z \bt_k - \bt_k^l]\left(\frac{(z_N(x-x_0)+z_0(x_N-x)}{(x_N-x_0)}\right) \nno \\
\mbox{and} \quad p_k^r(x) &= (1-\rho_y)\ p_k(x) + \rho_y\ y_k \nno \\ & \quad \mbox{}+ [(1-\rho_y) \al_k - \al_k^r]\left(\frac{y_N(x-x_0)+y_0(x_N-x)}{(x_N-x_0)}\right)
\nno \\ & \quad \mbox{}+ [(1-\rho_z) \bt_k - \bt_k^r]\left(\frac{z_N(x-x_0)+z_0(x_N-x)}{(x_N-x_0)}\right)
\end{align}
We use the above relation~\eqref{eq:rel} between $p_n$, $q_n$, $p_k^l$, $p_k^r$, $q_k^l$ and $q_k^r$ even when these functions are not linear polynomial and find Lipschitz exponent of the functions $p_k^l$, $p_k^r$, $q_k^l$ and $q_k^r$ in the following proposition:

\begin{proposition}
Let $x_0=0$, $x_N=1$,  $p_k \in \mbox{Lip}\ \lambda_k$ and $q_k \in \mbox{Lip}\ \mu_k$. Then, $p_k^l, p_k^r \in \mbox{Lip}\ \lambda_k$ and $q_k^l, q_k^r \in \mbox{Lip}\ \mu_k$.
\end{proposition}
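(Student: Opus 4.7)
The plan is to read the Lipschitz regularity directly off the explicit representations in~\eqref{eq:rel}. Each of $p_k^l,p_k^r,q_k^l,q_k^r$ is displayed there as a scalar multiple of $p_k$ (resp.\ $q_k$) plus constants plus affine corrections in $x$, so the argument should be a short verification rather than a fresh estimate from first principles.

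First I would record the elementary fact: on a bounded interval, any affine function $x\mapsto ax+b$ is Lipschitz of every order $\delta\in(0,1]$. Since we normalize $x_0=0$, $x_N=1$, the corrective terms appearing in~\eqref{eq:rel}, for instance
\[
\frac{y_N(x-x_0)+y_0(x_N-x)}{x_N-x_0}=(y_N-y_0)x+y_0,
\]
and the analogous expression in $z$, are affine on $[0,1]$; on this interval $|x-\bar{x}|\le 1$ yields $|x-\bar{x}|\le |x-\bar{x}|^{\delta}$, so these affine pieces lie in $\mbox{Lip}\ \delta$ for every $\delta\in(0,1]$, in particular for $\delta=\lambda_k$ and $\delta=\mu_k$.

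Second, I would invoke the (equally elementary) fact that $\mbox{Lip}\ \delta$ is a vector space, and that on a bounded interval $\mbox{Lip}\ \delta_2\subset \mbox{Lip}\ \delta_1$ whenever $\delta_1\le \delta_2$. Reading~\eqref{eq:rel} line by line then finishes the proof. For the first two lines, $q_k^l$ and $q_k^r$ are linear combinations of $q_k\in\mbox{Lip}\ \mu_k$, constants, and an affine function of $x$, so $q_k^l,q_k^r\in\mbox{Lip}\ \mu_k$. For the last two lines, $p_k^l$ and $p_k^r$ are linear combinations of $p_k\in\mbox{Lip}\ \lambda_k$, constants, and affine functions of $x$, giving $p_k^l,p_k^r\in\mbox{Lip}\ \lambda_k$.

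There is no genuine obstacle here: the only substantive point is the embedding $\mbox{Lip}\ 1\subset \mbox{Lip}\ \delta$ on a bounded interval, which is exactly where the normalization $x_0=0$, $x_N=1$ is used. If one wanted to remove that normalization, the same argument would work with the explicit Lipschitz constants scaled by powers of $x_N-x_0$, but under the stated hypothesis the scaling disappears.
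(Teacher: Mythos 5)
Your proposal is correct and follows essentially the same route as the paper: both arguments read the regularity off the representation~\eqref{eq:rel} and reduce to the embedding $|x-\bar{x}|\le|x-\bar{x}|^{\delta}$ on $[0,1]$, the paper simply writing out the triangle-inequality estimates with explicit constants $K_1,K_2,M_1,\ldots,M_4$ where you package the same content as ``affine functions lie in every Lipschitz class on a bounded interval and $\mbox{Lip}\ \delta$ is a vector space.'' (Incidentally, your cleaner bookkeeping avoids the paper's slip of writing $|x-\bar{x}|^{\lambda_k}$ instead of $|x-\bar{x}|^{\mu_k}$ in the final lines of its $q_k^l,q_k^r$ estimates.)
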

\begin{proof}
By~\eqref{eq:rel}, it is observed that
\begin{align}\label{eq:qkl}
|q_k^l(x)-q_k^l(\bar{x})| & \leq |\rho_z|\ |q_k(x)-q_k(\bar{x})|  + |\rho_z\ \gm_k - \gm_k^l|\bigg((|z_N|+ |z_0|)|x-\bar{x}|\bigg) \nno \\
& \leq |\rho_z|\ K_1 |x-\bar{x}|^{\mu_k}  + |\rho_z\ \gm_k - \gm_k^l|\bigg((|z_N|+ |z_0|)|x-\bar{x}|\bigg) \nno \\
& \leq M_1 |x-\bar{x}|^{\lambda_k}
\end{align}
and
\begin{align}\label{eq:qkr}
|q_k^r(x)-q_k^r(\bar{x})| & \leq |1-\rho_z|\ |q_k(x)-q_k(\bar{x})| + |(1-\rho_z)\ \gm_k - \gm_k^r|\bigg((|z_N|+ |z_0|)|x-\bar{x}|\bigg) \nno \\
& \leq |1-\rho_z|\ K_1 |x-\bar{x}|^{\mu_k}  + |(1-\rho_z)\ \gm_k - \gm_k^r| \bigg((|z_N|+ |z_0|)|x-\bar{x}|\bigg) \nno \\
& \leq M_2 |x-\bar{x}|^{\lambda_k}
\end{align}
where, $K_1$ is a positive constant such that $ |q_k(x)-q_k(\bar{x})| \leq K_1 |x-\bar{x}|^{\mu_k}$,  $M_1$ and $M_2$ are positive constants given by $M_1 = \max(|\rho_z| K_1 ,|\rho_z \gm_k - \gm_k^l|(|z_N|+ |z_0|))$ and $M_2 = \max(|1-\rho_z| K_1 ,|\rho_z(1-\gm_k) - \gm_k^l|(|z_N|+ |z_0|))$. Hence the functions $q_k^l$  and $ q_k^r \in \mbox{Lip}\ \mu_k$. Similarly,
\begin{align}\label{eq:pkl}
|p_k^l(x)-p_k^l(\bar{x})| 
& \leq  |\rho_y|\ K_2  |x-\bar{x}|^{\lambda_k}  + |\rho_y \al_k - \al_k^l| \bigg((|y_N|+ |y_0|)|x-\bar{x}|\bigg) \nno \\ & \quad \mbox{} +|\rho_z \bt_k - \bt_k^l|\bigg((|z_N|+ |z_0|)|x-\bar{x}|\bigg) \nno \\
& \leq M_3 |x-\bar{x}|^{\lambda_k}
\end{align}
and
\begin{align}\label{eq:pkr}
|p_k^r(x)-p_k^r(\bar{x})| 
& \leq  |1-\rho_y|\ K_2 |x-\bar{x}|^{\lambda_k} + + |(1-\rho_y) \al_k - \al_k^r| \bigg((|y_N|+ |y_0|)|x-\bar{x}|\bigg) \nno \\ & \quad \mbox{} + |(1-\rho_z) \bt_k - \bt_k^r| \bigg((|z_N|+ |z_0|)|x-\bar{x}|\bigg) \nno \\
& \leq M_4 |x-\bar{x}|^{\lambda_k}
\end{align}
where, $K_2$ is a positive constant such that $ |p_k(x)-p_k(\bar{x})| \leq K_2 |x-\bar{x}|^{\la_k}$,  $M_3$ and $M_4$ are positive constants given by $M_3 = \max(|\rho_y| K_2,|\rho_y \al_k - \al_k^l|(|y_N|+ |y_0|),|\rho_z \bt_k - \bt_k^l|(|z_N|+ |z_0|))$ and $M_4 = \max(|1-\rho_y| K_2,|(1-\rho_y) \al_k - \al_k^l|(|y_N|+ |y_0|),|(1-\rho_z) \bt_k - \bt_k^l|(|z_N|+ |z_0|))$.
 From~\eqref{eq:pkl} and \eqref{eq:pkr}, it is clear that $p_k^l,p_k^r \in \mbox{Lip}\ \lambda_k$.
 \qed \end{proof}

Let $ \la , \mu , \al, \gm, \bt, \Om_n, \Gm_n,  \Th_n,  \Om, \Gm$ and $\Th$ be as defined in~\cite{chand07}. The smoothness of a CHFIF was classified according to the values of $\Om, \Gm$ and $\Th$. With the help of above proposition, we define
\begin{align}\label{eq:OmGmThlr}
\Om_k^l &= \frac{|\al_k^l|}{|I_k^l|^{\la}}, \Gm_k^l = \frac{|\gm_k^l|}{|I_k^l|^{\mu}},\Th_k^l = \frac{|\al_k^l|}{|I_k^l|^{\mu}} \nno \\
\Om_k^r &=  \frac{|\al_k^r|}{|I_k^r|^{\la}}, \Gm_k^r = \frac{|\gm_k^r|}{|I_k^r|^{\mu}},\Th_k^r = \frac{|\al_k^r|}{|I_k^r|^{\mu}} \nno \\
\widehat{\Om}&= \max \{\Om_n : n = 1,2,\ldots,N, n \neq k , \Om_k^l, \Om_k^r \} \nno \\
\widehat{\Gm} &= \max \{\Gm_n : n = 1,2,\ldots,N, n \neq k, \Gm_k^l, \Gm_k^r \} \nno \\
\widehat{\Th} &= \max \{ \Th_n : n = 1,2,\ldots,N, n \neq k \Th_k^l, \Th_k^r \}.
\end{align}

The following theorem gives the relation between $\Om$ and $\widehat{\Om}$.
\begin{theorem}\label{th:1}
Let $x_0=0$, $x_N=1$, $\al_k^l =\rho_x \al_k$, $\al_k^r =(1-\rho_x) \al_k$  and $p_n \in \mbox{Lip}\ \lambda_n$. Then,
\begin{enumerate}
\item $\widehat{\Om} < 1$  if  $\Om < 1$ or $ \Om_k < \frac{1}{\max(|\rho_x|^{1-\la},|1-\rho_x|^{1-\la})}$ and $\Om_n < 1, n \neq k$
\item $\widehat{\Om} = 1$  if $ \Om = \Om_n = 1$ for some $n \neq k$ or $1 < \Om =\Om_k = \frac{1}{\max(|\rho_x|^{1-\la},|1-\rho_x|^{1-\la})} $ and $\Om_n \leq 1$ for all $n \neq k$.
\item $\widehat{\Om} > 1$ if $\Om = \Om_n > 1 $ for some $n \neq k$ or $\Om = \Om_k > \frac{1}{\max(|\rho_x|^{1-\la},|1-\rho_x|^{1-\la})} > 1$.
\end{enumerate}
\end{theorem}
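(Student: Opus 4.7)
The strategy is to express $\Om_k^l$ and $\Om_k^r$ in closed form as multiples of $\Om_k$, which collapses the maximum defining $\widehat{\Om}$ to a comparison between only two quantities, after which each of the three conclusions is a direct sign check.

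First I would observe that $|I_k^l| = \hat{x} - x_{k-1} = \rho_x\,|I_k|$ and $|I_k^r| = x_k - \hat{x} = (1-\rho_x)|I_k|$. Substituting these together with the hypotheses $\al_k^l = \rho_x\al_k$ and $\al_k^r = (1-\rho_x)\al_k$ into the definitions in~\eqref{eq:OmGmThlr} gives
\[
\Om_k^l \;=\; \frac{|\rho_x\al_k|}{(\rho_x|I_k|)^{\la}} \;=\; |\rho_x|^{1-\la}\,\Om_k,
\qquad
\Om_k^r \;=\; |1-\rho_x|^{1-\la}\,\Om_k.
\]
Setting $M := \max(|\rho_x|^{1-\la},\,|1-\rho_x|^{1-\la})$, this yields the reformulation
\[
\widehat{\Om} \;=\; \max\Bigl\{\,\max_{n\neq k}\Om_n,\ M\,\Om_k\Bigr\}.
\]
Since $0<\rho_x<1$ and $0<\la\le 1$, both $|\rho_x|^{1-\la}$ and $|1-\rho_x|^{1-\la}$ lie in $(0,1]$, so $M\le 1$ and in particular $M\,\Om_k\le\Om_k$.

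From this reformulation the three cases are routine. For (1), if $\Om<1$ then every $\Om_n<1$ and $M\,\Om_k\le\Om_k<1$; alternatively, if $\Om_n<1$ for all $n\neq k$ and $\Om_k < 1/M$, then $M\,\Om_k<1$ while the other entry of the maximum is already $<1$, so $\widehat{\Om}<1$. For (2), if $\Om=\Om_n=1$ for some $n\neq k$ then $\max_{n\neq k}\Om_n=1$, and since $\Om_k\le\Om=1$ we have $M\Om_k\le 1$; if instead $1<\Om=\Om_k = 1/M$ and $\Om_n\le 1$ for all $n\neq k$, then $M\Om_k=1$ realises the maximum. For (3), either some $\Om_n>1$ with $n\neq k$ directly forces $\widehat{\Om}>1$, or $\Om_k > 1/M$ forces $M\Om_k>1$.

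The only step with any content is the closed-form identity $\Om_k^l = |\rho_x|^{1-\la}\Om_k$ and its analogue for $\Om_k^r$; once this is in hand, the threefold case analysis is entirely mechanical, and I do not anticipate any essential obstacle.
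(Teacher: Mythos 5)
Your proposal is correct and follows essentially the same route as the paper: both hinge on the identities $\Om_k^l = |\rho_x|^{1-\la}\Om_k$ and $\Om_k^r = |1-\rho_x|^{1-\la}\Om_k$ (the paper's equation~\eqref{eq:omklr}), followed by the same threefold case analysis. Your explicit introduction of $M=\max(|\rho_x|^{1-\la},|1-\rho_x|^{1-\la})$ and the rewriting of $\widehat{\Om}$ as a two-term maximum is only a notational tidying of the paper's argument, not a different method.
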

\begin{proof}
Using $\al_k^l =\rho_x \al_k$ and $\al_k^r =(1-\rho_x) \al_k$ in $\Om_k^l$ and $\Om_k^r$, it is observed that
\begin{align}\label{eq:omklr} \left.
\begin{array}{rl}
\Om_k^l &= \frac{|\al_k^l|}{|I_k^l|^{\la}} = \frac{|\rho_x \al_k|}{|\rho_x I_k|^{\la}} =  |\rho_x|^{1-\la} \Om_k \\
\Om_k^r &= \frac{|\al_k^r|}{|I_k^r|^{\la}} = \frac{|(1-\rho_x) \al_k|}{|(1-\rho_x) I_k|^{\la}} =  |1-\rho_x|^{1-\la} \Om_k .
\end{array} \right\}
\end{align}
Hence, $\Om_k^l, \Om_k^r  < \Om_k $   implies $ \widehat{\Om} \leq \Om $

Case 1: \textbf{$\Om < 1 $} \\
The above inequality gives $\widehat{\Om} \leq \Om < 1$.

Case 2: \textbf{$\Om = 1 $} \\
If  $\Om = \Om_n = 1 $ for some $ n \neq k$ then $\widehat{\Om} = 1$.

Suppose $\Om = \Om_k = 1 $ and $\Om_n <1$ for all $ n \neq k$. Then~\eqref{eq:omklr} gives $\widehat{\Om} < 1$.

Case 3. \textbf{$\Om > 1 $} \\
If  $\Om = \Om_n > 1 $ for some $ n \neq k$ then $\widehat{\Om} > 1$.

Let $\Om = \Om_k > 1 $ and $\Om_n \leq 1$ for all $ n \neq k$. Then $\widehat{\Om} > 1$ if $\Om_k > \frac{1}{\max(|\rho_x|^{1-\la},|\rho_x|^{1-\la})}$. If $\Om_n = 1$ for some $ n \neq k$ or $\Om = \Om_k =  \frac{1}{\max(|\rho_x|^{1-\la},|\rho_x|^{1-\la})}$ then $\widehat{\Om}  = 1$. Finally, if $\Om_n <1$ for all $ n \neq k$ and $ \Om = \Om_k < \frac{1}{\max(|\rho_x|^{1-\la},|\rho_x|^{1-\la})}$ then $\widehat{\Om}  < 1$.
\qed \end{proof}

The next theorem gives the relation between $\Gm$ and $\widehat{\Gm}$.
\begin{theorem}\label{th:2}
Let $x_0=0$, $x_N=1$, $\gm_k^l =\rho_x \gm_k$, $\gm_k^r =(1-\rho_x) \gm_k$  and $q_n \in \mbox{Lip}\ \mu_n$. Then,
\begin{enumerate}
\item $\widehat{\Gm} < 1$  if  $\Gm < 1$ or $ \Gm_k < \frac{1}{\max(|\rho_x|^{1-\mu},|1-\rho_x|^{1-\mu})}$ and $\Gm_n < 1, n \neq k$
\item $\widehat{\Gm} = 1$  if $ \Gm = \Gm_n = 1$ for some $n \neq k$ or $1 < \Gm =\Gm_k = \frac{1}{\max(|\rho_x|^{1-\mu},|1-\rho_x|^{1-\mu})} $ and $\Gm_n \leq 1$ for all $n \neq k$.
\item $\widehat{\Gm} > 1$ if $\Gm = \Gm_n > 1 $ for some $n \neq k$ or $\Gm = \Gm_k > \frac{1}{\max(|\rho_x|^{1-\mu},|1-\rho_x|^{1-\mu})} > 1$.
\end{enumerate}
\end{theorem}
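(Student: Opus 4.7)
The plan is to mirror the proof of Theorem~\ref{th:1} line for line, swapping the role of the $\al$-quantities with the $\gm$-quantities and the Lipschitz exponent $\la$ with $\mu$. Since $x_0=0$ and $x_N=1$, we have $|I_k^l|=\rho_x|I_k|$ and $|I_k^r|=(1-\rho_x)|I_k|$. Plugging the hypotheses $\gm_k^l=\rho_x\gm_k$ and $\gm_k^r=(1-\rho_x)\gm_k$ directly into the definitions of $\Gm_k^l$ and $\Gm_k^r$ given in~\eqref{eq:OmGmThlr}, I would obtain the scaling identity
\begin{align*}
\Gm_k^l = \frac{|\rho_x\gm_k|}{|\rho_x I_k|^{\mu}} = |\rho_x|^{1-\mu}\,\Gm_k, \qquad \Gm_k^r = |1-\rho_x|^{1-\mu}\,\Gm_k,
\end{align*}
which is the exact analogue of~\eqref{eq:omklr}.

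From this identity the baseline observation follows: since $0<\rho_x<1$ and $0<\mu\le 1$, both $|\rho_x|^{1-\mu}$ and $|1-\rho_x|^{1-\mu}$ lie in $(0,1]$, so $\Gm_k^l,\Gm_k^r\le \Gm_k\le \Gm$, and hence $\widehat{\Gm}\le \Gm$ by its definition as a maximum. With this inequality in hand, the three cases are dispatched exactly as in Theorem~\ref{th:1}: for (1), if $\Gm<1$ then $\widehat{\Gm}\le\Gm<1$ directly, while in the alternative hypothesis the bound $\Gm_k<1/\max(|\rho_x|^{1-\mu},|1-\rho_x|^{1-\mu})$ combined with the scaling identity forces $\Gm_k^l,\Gm_k^r<1$, and the assumption $\Gm_n<1$ for $n\neq k$ takes care of the remaining terms in the maximum. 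For (2), either some $\Gm_n=1$ with $n\ne k$ already survives into $\widehat{\Gm}$, or the threshold case $\Gm_k=1/\max(|\rho_x|^{1-\mu},|1-\rho_x|^{1-\mu})$ makes $\max(\Gm_k^l,\Gm_k^r)=1$ while all other $\Gm_n\le 1$. Case (3) is symmetric: either some $\Gm_n>1$ persists in the max, or the scaling identity shows that $\Gm_k$ exceeding the threshold makes at least one of $\Gm_k^l,\Gm_k^r$ strictly exceed $1$.

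I do not expect a real obstacle here, since the argument is just arithmetic rescaling once the proposition on $q_k^l,q_k^r\in\mathrm{Lip}\,\mu_k$ justifies using the exponent $\mu$ in the definitions~\eqref{eq:OmGmThlr}. The only point to verify carefully is that $\mu\le 1$, which ensures $|\rho_x|^{1-\mu}\le 1$ in the correct direction; this is precisely the standing Lipschitz regime for the $q_n$. The proof is therefore a near-transcription of the preceding theorem, and can close with \texttt{\textbackslash qed} immediately after the three-case analysis.
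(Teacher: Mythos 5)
Your proposal is correct and follows essentially the same route as the paper, which simply declares the proof to be that of Theorem~\ref{th:1} with $\al$ replaced by $\gm$, $\la$ by $\mu$, $\Om$ by $\Gm$ and $\widehat{\Om}$ by $\widehat{\Gm}$; your scaling identity $\Gm_k^l=|\rho_x|^{1-\mu}\Gm_k$, $\Gm_k^r=|1-\rho_x|^{1-\mu}\Gm_k$ is exactly the analogue of~\eqref{eq:omklr} and the ensuing three-case analysis matches the paper's argument. No gaps.
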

\begin{proof}
The proof is similar to Theorem~\ref{th:1} by replacing $\al$ by $\gm$, $\al_k^l$ by $\gm_k^l$, $\al_k^r$ by $\gm_k^r$,  $\Om$ by $\Gm$ and $\widehat{\Om} $ by $\widehat{\Gm}$ and hence omitted.
\qed \end{proof}

Similarly, the relation between $\Th$ and $\widehat{\Th}$ is obtained as follows:
\begin{theorem}\label{th:3}
Let $x_0=0$, $x_N=1$, $\al_k^l =\rho_x \al_k$, $\al_k^r =(1-\rho_x) \al_k$  and $q_n \in \mbox{Lip}\ \mu_n$. Then,
\begin{enumerate}
\item $\widehat{\Th} < 1$  if  $\Th < 1$ or $ \Th_k < \frac{1}{\max(|\rho_x|^{1-\mu},|1-\rho_x|^{1-\mu})}$ and $\Th_n < 1, n \neq k$
\item $\widehat{\Th} = 1$  if $ \Th = \Th_n = 1$ for some $n \neq k$ or $1 < \Th =\Th_k = \frac{1}{\max(|\rho_x|^{1-\mu},|1-\rho_x|^{1-\mu})} $ and $\Th_n \leq 1$ for all $n \neq k$.
\item $\widehat{\Th} > 1$ if $\Th = \Th_n > 1 $ for some $n \neq k$ or $\Th = \Th_k > \frac{1}{\max(|\rho_x|^{1-\mu},|1-\rho_x|^{1-\mu})} > 1$.
\end{enumerate}
\end{theorem}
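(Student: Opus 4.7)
The strategy is to mimic the proof of Theorem~\ref{th:1} verbatim, with $\al$ replaced by $\al$ in the numerator but the exponent in the denominator changed from $\la$ to $\mu$. The key computation I would do first is to express $\Th_k^l$ and $\Th_k^r$ in terms of $\Th_k$. Using the hypotheses $\al_k^l=\rho_x\al_k$, $\al_k^r=(1-\rho_x)\al_k$, together with the obvious identities $|I_k^l|=\rho_x|I_k|$ and $|I_k^r|=(1-\rho_x)|I_k|$ that follow from~\eqref{eq:Lklr}, I would establish the scaling relations
\begin{align*}
\Th_k^l &= \frac{|\al_k^l|}{|I_k^l|^{\mu}} = \frac{|\rho_x\al_k|}{|\rho_x I_k|^{\mu}} = |\rho_x|^{1-\mu}\,\Th_k,\\
\Th_k^r &= \frac{|\al_k^r|}{|I_k^r|^{\mu}} = \frac{|(1-\rho_x)\al_k|}{|(1-\rho_x) I_k|^{\mu}} = |1-\rho_x|^{1-\mu}\,\Th_k.
\end{align*}
Since $0<\rho_x<1$ and $0<\mu\leq 1$, both factors $|\rho_x|^{1-\mu}$ and $|1-\rho_x|^{1-\mu}$ are strictly less than one, so $\Th_k^l,\Th_k^r<\Th_k$. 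Combined with the fact that the remaining $\Th_n$ for $n\neq k$ are unchanged, this yields the baseline comparison $\widehat{\Th}\leq\Th$.

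Next I would split into the three cases of the statement. If $\Th<1$, the baseline comparison immediately gives $\widehat{\Th}<1$; if additionally $\Th_k<\big(\max(|\rho_x|^{1-\mu},|1-\rho_x|^{1-\mu})\big)^{-1}$ and $\Th_n<1$ for $n\neq k$, then the explicit scaling relations above force $\Th_k^l,\Th_k^r<1$, so $\widehat{\Th}<1$ even when $\Th_k\geq 1$. For the equality case, I would separate the subcases where the maximum defining $\Th$ is attained at some $\Th_n$ with $n\neq k$ (in which case $\widehat{\Th}=\Th_n=1$ trivially) versus attained at $\Th_k$, where the scaling relations identify precisely when $\max(\Th_k^l,\Th_k^r)=1$, namely when $\Th_k$ equals the reciprocal of $\max(|\rho_x|^{1-\mu},|1-\rho_x|^{1-\mu})$. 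The case $\widehat{\Th}>1$ is handled symmetrically: either some unchanged $\Th_n$ already exceeds one, or $\Th_k$ exceeds the threshold that makes one of $\Th_k^l,\Th_k^r$ exceed one.

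The main (and really only) obstacle is bookkeeping, rather than any substantive mathematical difficulty: one must keep careful track of which index attains the maximum in the definition of $\widehat{\Th}$ and ensure that, when $k$ is the attaining index, the scaled values $|\rho_x|^{1-\mu}\Th_k$ and $|1-\rho_x|^{1-\mu}\Th_k$ are correctly compared with $1$. Because the algebraic structure is identical to that of Theorem~\ref{th:1}, no new idea is required; the proof is essentially a cut-and-paste with $\la$ replaced by $\mu$ in the exponent, and one could legitimately finish by saying the argument is identical to Theorem~\ref{th:1} mutatis mutandis, which is indeed the route the author takes in Theorem~\ref{th:2}.
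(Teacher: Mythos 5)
Your proposal is correct and takes exactly the route the paper does: the paper's proof of this theorem consists of the single remark that one replaces $\la$ by $\mu$ and $\Om$ by $\widehat{\Th}$ in the proof of Theorem~\ref{th:1}, which is precisely the substitution you carry out explicitly via the scaling relations $\Th_k^l=|\rho_x|^{1-\mu}\Th_k$ and $\Th_k^r=|1-\rho_x|^{1-\mu}\Th_k$. Your worked-out case analysis matches the paper's argument for Theorem~\ref{th:1} step for step, so there is nothing further to add.
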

\begin{proof}
The proof is similar to Theorem~\ref{th:1} by replacing $\la$ by $\mu$,   $\Om$ by $\Th$ and $\widehat{\Om} $ by $\widehat{\Th}$ and hence omitted.
\qed \end{proof}

\begin{remark}
If $\la=\mu=1$ then $\widehat{\Th}=\widehat{\Om}=\Om=\Th$ and $\widehat{\Gm}=\Gm$.
\end{remark}

Similar to Theorems $3.1-3.3$ in~\cite{chand07}, the smoothness of CHFIF  $\hat{f}_1$ passing through the interpolation data $\widehat{\Lambda}$ can be obtained as follows:
\begin{enumerate}
\item  $\ \hat{f}_1 \in \mbox{Lip} \ \hat{\de} $ if $\widehat{\Th} \neq 1$, $\widehat{\Om} \neq 1$ and $\widehat{\Gm} \neq 1$,

\item  $\om(\hat{f}_1; t)  = O(|t|^{\hat{\de}} \log |t|)$ if  $\widehat{\Th} \neq 1$,  $\widehat{\Om} = 1$ or $\widehat{\Gm} = 1$,

\item  $\om(\hat{f}_1; t)  = O(|t|^{\hat{\de}} \log |t|)$ if $\widehat{\Th} = 1$,  $\widehat{\Gm} \neq 1$ and $\widehat{\Om} \in \mathbb{R}$,

\item $\om(\hat{f}_1; t)  = O(|t|^{\hat{\de}} \log |t|^2)$ if $\widehat{\Th} = 1$,  $\widehat{\Gm} = 1$ and $\widehat{\Om} \in \mathbb{R}$.
\end{enumerate}
Theorems~\ref{th:1}-~\ref{th:3} helps to compare the Lipschitz exponent of CHFIF $\hat{f}_1$ passing through the interpolation data $\widehat{\Lambda}$ and CHFIF $f_1$ passing through the interpolation data $\Lambda$.

\begin{theorem}\label{th:sm}
Let $x_0=0$, $x_N=1$, $\gm_k^l =\rho_x \gm_k$, $\gm_k^r =(1-\rho_x) \gm_k$, $\al_k^l =\rho_x \al_k$, $\al_k^r =(1-\rho_x) \al_k$, $\bt_k^l =\rho_x \bt_k$, $\bt_k^r =(1-\rho_x) \bt_k$, $p_n \in \mbox{Lip}\ \lambda_n$ and $q_n \in \mbox{Lip}\ \mu_n$. Also, let $f_1$ and $\hat{f}_1$ be the CHFIFs passing through $\Lambda$ and $\widehat{\Lambda}$ respectively. If $\Om  \leq 1$, $\Gm  \leq 1$ and $\Th  \leq 1$, then one of the following is true:
\begin{enumerate}
\item $f_1$ and $\hat{f}_1$ belongs to same Lipschitz class
\item $f_1$ and $\hat{f}_1$ have same modulus of continuity i.e. $\om(\hat{f}_1; t) = \om(f_1; t) = O(|t|^{\de} \log |t|)$.
\item $\hat{f}_1 \in \mbox{Lip}\  \de$ while $\om(f_1; t) = O(|t|^{\de} \log |t|)$
\end{enumerate}
\end{theorem}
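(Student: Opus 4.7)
The plan is to apply Theorems~\ref{th:1}--\ref{th:3} under the standing hypothesis $\Om\le 1$, $\Gm\le 1$, $\Th\le 1$ and then perform a case analysis on the hatted quantities $\widehat\Om,\widehat\Gm,\widehat\Th$. The starting observation is that under the assumptions $\al_k^l=\rho_x\al_k$, $\al_k^r=(1-\rho_x)\al_k$, $\gm_k^l=\rho_x\gm_k$, $\gm_k^r=(1-\rho_x)\gm_k$, $\bt_k^l=\rho_x\bt_k$, $\bt_k^r=(1-\rho_x)\bt_k$, the identity~\eqref{eq:omklr} and its analogues for $\Gm_k^{l,r}$ and $\Th_k^{l,r}$ give
\begin{align*}
\Om_k^l &= |\rho_x|^{1-\la}\,\Om_k, & \Om_k^r &= |1-\rho_x|^{1-\la}\,\Om_k, \\
\Gm_k^l &= |\rho_x|^{1-\mu}\,\Gm_k, & \Gm_k^r &= |1-\rho_x|^{1-\mu}\,\Gm_k, \\
\Th_k^l &= |\rho_x|^{1-\mu}\,\Th_k, & \Th_k^r &= |1-\rho_x|^{1-\mu}\,\Th_k.
\end{align*}
Since $|\rho_x|^{1-\la},|1-\rho_x|^{1-\la},|\rho_x|^{1-\mu},|1-\rho_x|^{1-\mu}$ all lie in $[0,1]$, we immediately obtain $\widehat\Om\le\Om\le 1$, $\widehat\Gm\le\Gm\le 1$ and $\widehat\Th\le\Th\le 1$. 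Consequently the classification~(1)--(4) stated just above Theorem~\ref{th:sm} applies to $\hat f_1$, while the analogous classification from~\cite{chand07} applies to $f_1$.

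The next step is a case analysis based on which of $\Om,\Gm,\Th$ attain the value $1$ and, when they do, whether the defining maximum is realised by some index $n\neq k$ or only by $n=k$. Theorems~\ref{th:1}--\ref{th:3} supply the precise dichotomy: an equality such as $\Om=1$ realised only by $\Om_k$ collapses under the strict contraction factors above to $\widehat\Om<1$, whereas $\Om_n=1$ for some $n\neq k$ forces $\widehat\Om=1$ as well, and similarly for $\Gm$ and $\Th$. Working through the consequences: if all three of $\Om,\Gm,\Th$ are strictly below $1$, then so are the hatted quantities and both $f_1,\hat f_1$ fall in clause~(1), giving conclusion~(1); if some equality occurs but is attained away from $k$, the same clause of the classification is triggered simultaneously for $f_1$ and for $\hat f_1$, producing identical moduli of continuity and yielding conclusion~(2); if an equality is attained only at $k$, then the corresponding hatted quantity drops strictly below $1$ and $\hat f_1$ moves to a strictly better clause than $f_1$, which is conclusion~(3).

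The main obstacle is the combinatorial bookkeeping. Each of $\Om,\Gm,\Th$ can be in one of three states (strict inequality, equality attained off $k$, equality attained only at $k$), producing up to $27$ sub-cases that must be matched against the four classification clauses for $f_1$ and for $\hat f_1$ and then against the three conclusions of the theorem. I would organise the proof by constructing a table that sends each pattern of $(\Om,\Gm,\Th)$ to the clause triggered for $f_1$ and for $\hat f_1$, and then verifying row by row that the pair falls under one of conclusions~(1)--(3). The most delicate rows are those with $\Th=1$ combined with $\Gm=1$, where clause~(4) of the classification is activated; there one has to check that reducing $\widehat\Gm$ or $\widehat\Th$ below $1$ strips off precisely one logarithmic factor in the manner demanded by conclusion~(3), while keeping $\widehat\Om$ (which is free to be any real number in clause~(3)) compatible with the hypotheses.
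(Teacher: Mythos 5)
Your strategy coincides with the paper's: derive $\widehat{\Om}\le\Om$, $\widehat{\Gm}\le\Gm$, $\widehat{\Th}\le\Th$ from the scaling identities $\Om_k^l=|\rho_x|^{1-\la}\Om_k$, $\Om_k^r=|1-\rho_x|^{1-\la}\Om_k$ and their $\Gm$- and $\Th$-analogues, then read off which clause of the four-part smoothness classification applies to $f_1$ and to $\hat{f}_1$. The paper, however, does not attempt your exhaustive table: its proof checks exactly three homogeneous patterns --- $\Om,\Gm,\Th$ all strictly less than $1$; all three equal to $1$ with each maximum attained at some index $n_i\neq k$; all three equal to $1$ with each maximum attained only at $k$ --- and matches them to conclusions (1), (2), (3) respectively.

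The gap in your proposal is that the decisive step, the row-by-row verification, is never carried out, and the rows you yourself single out as delicate do not land where you hope. Take $\Th=\Th_{n_3}=1$ for some $n_3\neq k$ together with $\Gm=\Gm_k=1$ attained only at $k$. Then $\widehat{\Th}=1$ and $\widehat{\Gm}<1$, so clause (4) of the classification gives $\om(f_1;t)=O(|t|^{\de}\log|t|^2)$ while clause (3) gives only $\om(\hat{f}_1;t)=O(|t|^{\de}\log|t|)$: one logarithmic factor is stripped off, but $\hat{f}_1$ is not in $\mbox{Lip}\ \de$, so conclusion (3) of the theorem (which demands $\hat{f}_1\in\mbox{Lip}\ \de$) fails, and conclusions (1) and (2) fail as well. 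A faithful execution of your table would therefore show that such mixed patterns fall outside the stated trichotomy, and the check you describe cannot discharge them. Be aware that the paper's own proof silently avoids this by restricting to the three homogeneous cases (and even there it quotes $O(|t|^{\de}\log|t|)$ in its second case, where clause (4) would actually give $O(|t|^{\de}\log|t|^2)$), so the difficulty you flagged is genuine and is not resolved in the source either; to complete the argument along your lines you would either have to exclude the mixed patterns by hypothesis or add a fourth alternative to the conclusion.
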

\begin{proof}
\begin{enumerate}
\item From Theorems~\ref{th:1}-~\ref{th:3}, it is seen that if $\Om < 1$, $\Gm < 1$ and $\Th < 1$ then $\widehat{\Om} < 1$, $\widehat{\Gm} < 1$ and $\widehat{\Th} < 1$. It is shown in~\cite{chand07} that $f_1 \in \mbox{Lip}\ \de$, where $\de = \min(\la, \mu)$. Following the lines of proof of Theorem $3.1$ in~\cite{chand07}, it is easily proved that $\hat{\de} = \min(\la, \mu) = \de$.

\item If $\Om = \Om_{n1} =1$, $\Gm = \Gm_{n2} =1$ and $\Th = \Th_{n3}=1$ for some $n1,n2,n3 \neq k$, then $\widehat{\Om} = \Om = 1$, $\widehat{\Gm} = \Gm =1$ and $\widehat{\Th} = \Th =1$. Hence, $ \om(f_1; t) = O(|t|^{\de} \log |t|)$ and $\om(\hat{f}_1; t) = O(|t|^{\hat{\de}} \log |t|)$. Since $\hat{\de} = \min(\la, \mu) = \de$, $f_1$ and $\hat{f}_1$ have same modulus of continuity i.e. $\om(\hat{f}_1; t) = \om(f_1; t) = O(|t|^{\de} \log |t|)$.

\item If $\Om = \Om_k =1$, $\Gm = \Gm_k =1$ and $\Th = \Th_k=1$, then $\widehat{\Om} < 1$, $\widehat{\Gm} <1$ and $\widehat{\Th}<1$. So, $\hat{f}_1 \in \mbox{Lip}\  \hat{\de} = \mbox{Lip}\ \de $ while $\om(f_1; t) = O(|t|^{\de} \log |t|)$.
\end{enumerate}
\qed
\end{proof}

The above theorem helps in comparing the bounds of fractal dimension of CHFIF $\hat{f}_1$ with that of CHFIF $f_1$.

\begin{theorem}
Let $x_0=0$, $x_N=1$, $\gm_k^l =\rho_x \gm_k$, $\gm_k^r =(1-\rho_x) \gm_k$, $\al_k^l =\rho_x \al_k$, $\al_k^r =(1-\rho_x) \al_k$, $\bt_k^l =\rho_x \bt_k$, $\bt_k^r =(1-\rho_x) \bt_k$, $p_n \in \mbox{Lip}\ \lambda_n$ and $q_n \in \mbox{Lip}\ \mu_n$. Also, let $f_1$ and $\hat{f}_1$ be the CHFIFs passing through $\Lambda$ and $\widehat{\Lambda}$ respectively. If $\Om = \Om_{n1} =1$, $\Gm = \Gm_{n2} =1$ and $\Th = \Th_{n3}=1$ for some $n1,n2,n3 \neq k$, then the upper bound of fractal dimension of CHFIF $\hat{f}_1$ is less than the upper bound of fractal dimension of CHFIF $f_1$  whereas  the lower bound of fractal dimension of CHFIF $\hat{f}_1$  is greater than the lower bound of fractal dimension of CHFIF $f_1$.
\end{theorem}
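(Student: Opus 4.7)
The plan is to combine Theorem~\ref{th:sm} with a strict-convexity estimate on the Moran-type equations that bound the fractal dimension of a CHFIF in~\cite{chand07}. First, since $\Om=\Om_{n_1}=1$, $\Gm=\Gm_{n_2}=1$ and $\Th=\Th_{n_3}=1$ for indices $n_1,n_2,n_3\neq k$, and these three indices are unchanged by the subdivision at $k$, the maxima in~\eqref{eq:OmGmThlr} still contain them, so $\widehat{\Om}=\widehat{\Gm}=\widehat{\Th}=1$. Hence both CHFIFs satisfy $\om(f_1;t)=\om(\hat f_1;t)=O(|t|^\de\log|t|)$ with $\de=\min(\la,\mu)$, and the bounds on fractal dimension from~\cite{chand07} apply to both functions in the same form.

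Next I would record the decisive identity. With $\al_k^l=\rho_x\al_k$, $\al_k^r=(1-\rho_x)\al_k$, the analogous relations for $\bt$ and $\gm$, and $|I_k^l|=\rho_x|I_k|$, $|I_k^r|=(1-\rho_x)|I_k|$, for any coefficient $c_n\in\{|\al_n|,|\bt_n|,|\gm_n|\}$ and any $s>0$ one has
\begin{align*}
c_k^l\,|I_k^l|^{s-1}+c_k^r\,|I_k^r|^{s-1}=c_k\,|I_k|^{s-1}\bigl(\rho_x^{s}+(1-\rho_x)^{s}\bigr).
\end{align*}
For $0<\rho_x<1$, strict convexity of $t\mapsto t^s$ on $[0,1]$ yields $\rho_x^{s}+(1-\rho_x)^{s}<1$ when $s>1$, with the reverse strict inequality when $0<s<1$.

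I would then insert this identity into the Moran-type equations $\sum_n c_n|I_n|^{d-1}=1$ that determine $D_U$ and $D_L$ in~\cite{chand07}, with $D_U$ arising from an upper convex envelope of the contraction factors and $D_L$ from the complementary lower one. Because $D_U,D_L\in(1,2]$, the exponent satisfies $d>1$, so the modified sum for the upper-envelope coefficients is strictly smaller at $d=D_U$; since $d\mapsto\sum_n c_n|I_n|^{d-1}$ is strictly decreasing on $(1,\infty)$ as each $|I_n|<1$, the solution $\hat D_U$ of the modified equation lies strictly below $D_U$. Applied to the equation for $D_L$, whose envelope points the other way, subdivision strictly increases the sum at $d=D_L$, forcing $\hat D_L>D_L$.

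The main obstacle is matching the convexity estimate to the precise coefficients $c_n$ appearing in the two Moran-type equations of~\cite{chand07}: once the upper and lower envelopes producing $D_U$ and $D_L$ are written out explicitly, the argument reduces to a termwise application of the single inequality $\rho_x^d+(1-\rho_x)^d<1$ for $d>1$, together with strict monotonicity of the Moran sum in $d$. The proportional splitting $\al_k^{l,r}=\rho_x\al_k,(1-\rho_x)\al_k$ (and similarly for $\bt_k,\gm_k$) built into the hypotheses is engineered precisely so that only the subdivided index contributes a strict inequality while all other terms in the Moran sum are unchanged.
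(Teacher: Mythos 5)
Your opening step coincides with the paper's: since $n_1,n_2,n_3\neq k$, those indices survive the maxima in~\eqref{eq:OmGmThlr}, so $\widehat{\Om}=\widehat{\Gm}=\widehat{\Th}=1$, and Theorem~\ref{th:sm} gives $\om(f_1;t)=\om(\hat{f}_1;t)=O(|t|^{\de}\log|t|)$. After that the proposal diverges from what~\cite{chand07} actually supplies, and the divergence is where the gap lies. The dimension estimates invoked by the paper (Theorems $4.1$ and $4.2$ of~\cite{chand07}) are not defined implicitly as roots of Moran-type equations $\sum_n c_n|I_n|^{d-1}=1$ with upper and lower envelopes; they are closed-form bounds, $1-\log\big(\sum_{i=1}^N|\al_i|\big)/\log|I_{max}|\leq D_F(Graph(f_1))\leq 1-\de-\log N/\log|I_{max}|$ (with $\gm_i$ in place of $\al_i$ when $\Gm=1$). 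The paper's proof consists of rewriting these bounds for the refined IFS --- $N$ replaced by $N+1$, $I_{max}$ replaced by $\hat{I}_{max}=\max\{|I_n|:n\neq k,\ |\rho_x I_k|,\ |(1-\rho_x)I_k|\}\leq I_{max}$, and the coefficient sums unchanged because $|\al_k^l|+|\al_k^r|=|\al_k|$ and $|\gm_k^l|+|\gm_k^r|=|\gm_k|$ under the proportional splitting --- and then comparing. Your convexity identity is algebraically correct, but it has nothing to act on: the quantities being compared are not solutions of the equations you introduce, and you explicitly defer (``the main obstacle is matching the convexity estimate to the precise coefficients'') the one step that constitutes the entire proof.

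Even granting your Moran-equation framework, the lower-bound half of your argument would fail on its own terms. Your identity shows that for every $d>1$ the split term contributes $c_k|I_k|^{d-1}\big(\rho_x^{d}+(1-\rho_x)^{d}\big)<c_k|I_k|^{d-1}$, so the sum $\sum_n c_n|I_n|^{d-1}$ strictly decreases at any fixed $d>1$ regardless of which coefficients $c_n$ appear; combined with strict monotonicity in $d$, this pushes the root of \emph{every} such equation downward. The phrase ``whose envelope points the other way'' does no mathematical work --- nothing in your setup reverses the inequality $\rho_x^{d}+(1-\rho_x)^{d}<1$ for the lower-bound equation --- so your own reasoning would yield a \emph{smaller} lower bound, the opposite of the assertion. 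To repair the proof, drop the Moran formulation and compare the explicit logarithmic bounds directly, as the paper does, using $N+1>N$, $\hat{I}_{max}\leq I_{max}<1$, and the invariance of $\sum_i|\al_i|$ and $\sum_i|\gm_i|$ under the splitting.
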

\begin{proof}
By Theorem~\ref{th:sm}, $f_1$ and $\hat{f}_1$ have same modulus of continuity i.e. $\om(\hat{f}_1; t) = \om(f_1; t) = O(|t|^{\de} \log |t|)$. It is shown in Theorem $4.1$ and $4.2$ of~\cite{chand07} that the fractal dimension of CHFIF $f_1$ satisfy
\begin{align*}
1-\frac{\log(\sum_{i=1}^N \ |\al_i|)}{\log |I_{max}|}\leq D_F(Graph(f_1)) \leq 1 - \de - \frac{\log N}{\log |I_{max}|}
\end{align*}
if $\Th = 1$ or $\Om =1$ and
\begin{align*}
1-\frac{\log(\sum_{i=1}^N \ |\gm_i|)}{\log |I_{max}|}\leq D_F(Graph(f_1)) \leq 1 - \de - \frac{\log N}{\log |I_{max}|}
\end{align*}
if $\Gm =1$. Similarly, following the lines of proof of Theorems $4.1$ and $4.2$ in~\cite{chand07}, the fractal dimension of CHFIF $\hat{f}_1$ satisfy
\begin{align*}
1-\frac{\log(\sum_{i=1}^N \ |\al_i|)}{\log |\hat{I}_{max}|}\leq D_F(Graph(\hat{f}_1)) \leq 1 - \de - \frac{\log (N+1)}{\log |\hat{I}_{max}|}
\end{align*}
if $\widehat{\Th} =1$ or $\widehat{\Om}  =1$ and
\begin{align*}
1-\frac{\log(\sum_{i=1}^N \ |\gm_i|)}{\log |\hat{I}_{max}|}\leq D_F(Graph(\hat{f}_1)) \leq 1 - \de - \frac{\log (N+1)}{\log |\hat{I}_{max}|}
\end{align*}
if $\widehat{\Gm} =1$. In the above two inequalities, $\hat{I}_{max} = \max\{|I_n|: n=1,\ldots, N, n\neq k, |\rho_x I_k|, |(1-\rho_x) I_k|\} \leq I_{max}$. Therefore,
the upper bound of fractal dimension of CHFIF $\hat{f}_1$ is less than the upper bound of fractal dimension of CHFIF $f_1$  whereas  the lower bound of fractal dimension of CHFIF $\hat{f}_1$  is greater than the lower bound of fractal dimension of CHFIF $f_1$.
\qed
\end{proof}

\section{Examples}\label{sec:five}

Consider a sample generalized interpolation data as $ \Delta = \{(0,0,10), (30,90,40), \\ (60,70,80), (100,20,30)\}$. The free variables $\al_n, \gm_n$ and constrained variables $\bt_n$ are chosen as given in Table~\ref{tab:Table1}. In Figure~\ref{fig:fig1}, Node-Node insertion is depicted by considering the node point $(45,60,20)$ in the given $\Delta$. Node-Knot insertion is shown in Figure~\ref{fig:fig2} by considering the point $(45,60,68.21)$ in the given $\Delta$. In Figure~\ref{fig:fig3}, the point $(45,198.43,20)$ is inserted in $\Delta$ to depict Knot-Node insertion and finally Figure~\ref{fig:fig4} shows the Knot-Knot insertion by considering the point $(45,198.43,68.21)$ in the given $\Delta$. In all the figures, the blue curve represents CHFIF $f_1$  obtained from the $\Delta$ while the black curve is CHFIF $\hat{f}_1$ obtained from  $\Delta \bigcup (\hat{x},\hat{y},\hat{z})$.

\begin{table}[!hbp]
\begin{center}\caption{Values of free variables and constrained variables
   \label{tab:Table1}} \vspace{0.2cm}
\begin{tabular}{|c|c|c|c|}  \hline  n & 1 & 2 & 3 \\ \hline
$\al_n$   & 0.2  &0.5   &0.3   \\  \hline
$\gm_n$   & 0.6  &0.2   &0.5   \\  \hline
$\bt_n$   & 0.3  &0.4   &0.1   \\  \hline
\end{tabular}
\end{center}
\end{table}

\begin{figure}[!htp]
\begin{center}\caption{Node Insertion} \vspace{0.2cm}
{\centering \hspace{1cm} \subfigure[ Node-Node insertion]{\epsfig{file=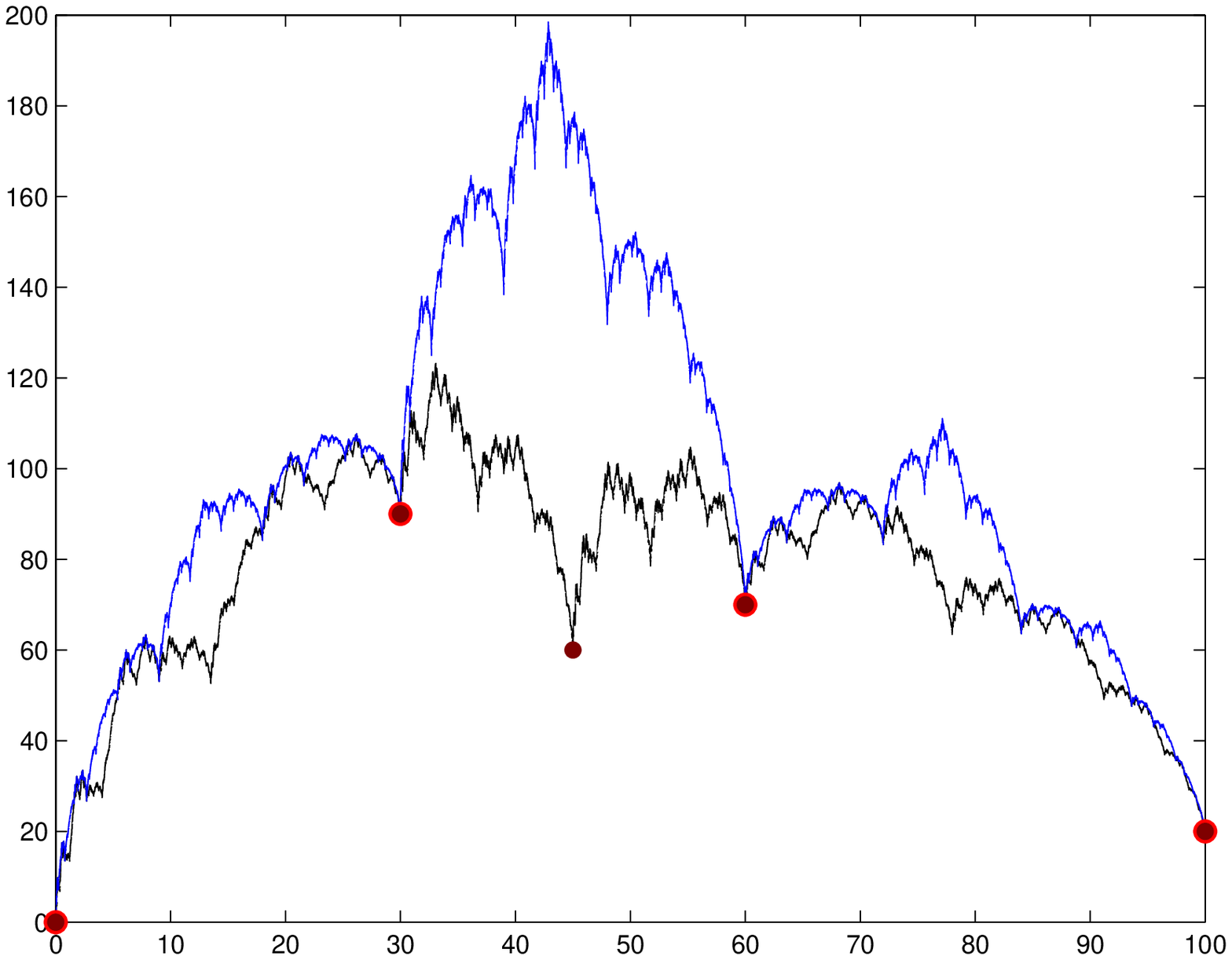, width=6cm}\label{fig:fig1}}
 \hspace{1cm}\subfigure[Node-Knot insertion]{\epsfig{file=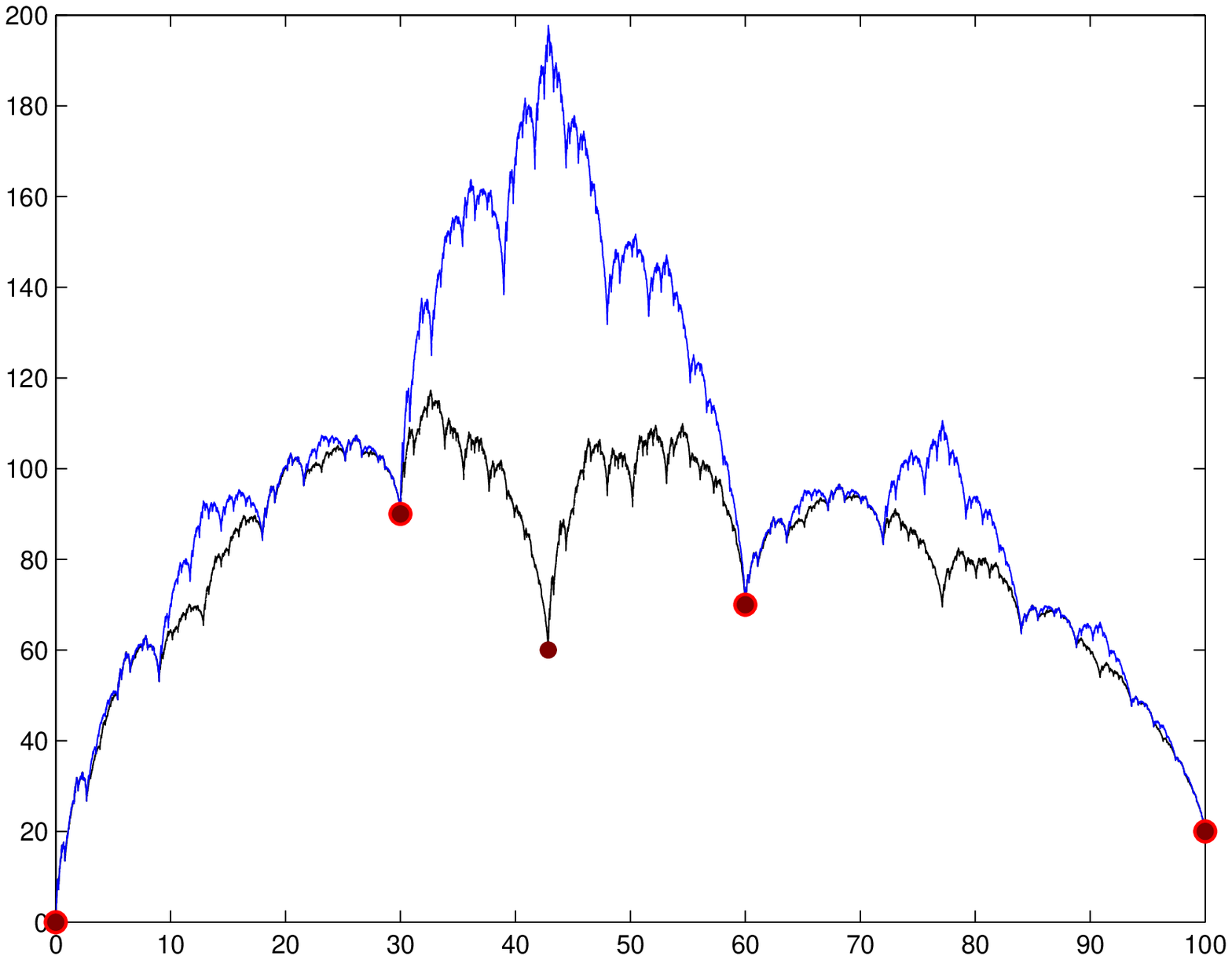, width=6cm}\label{fig:fig2}}}
  \\
{\centering  \hspace{1cm} \subfigure[ Knot-Node insertion]{\epsfig{file=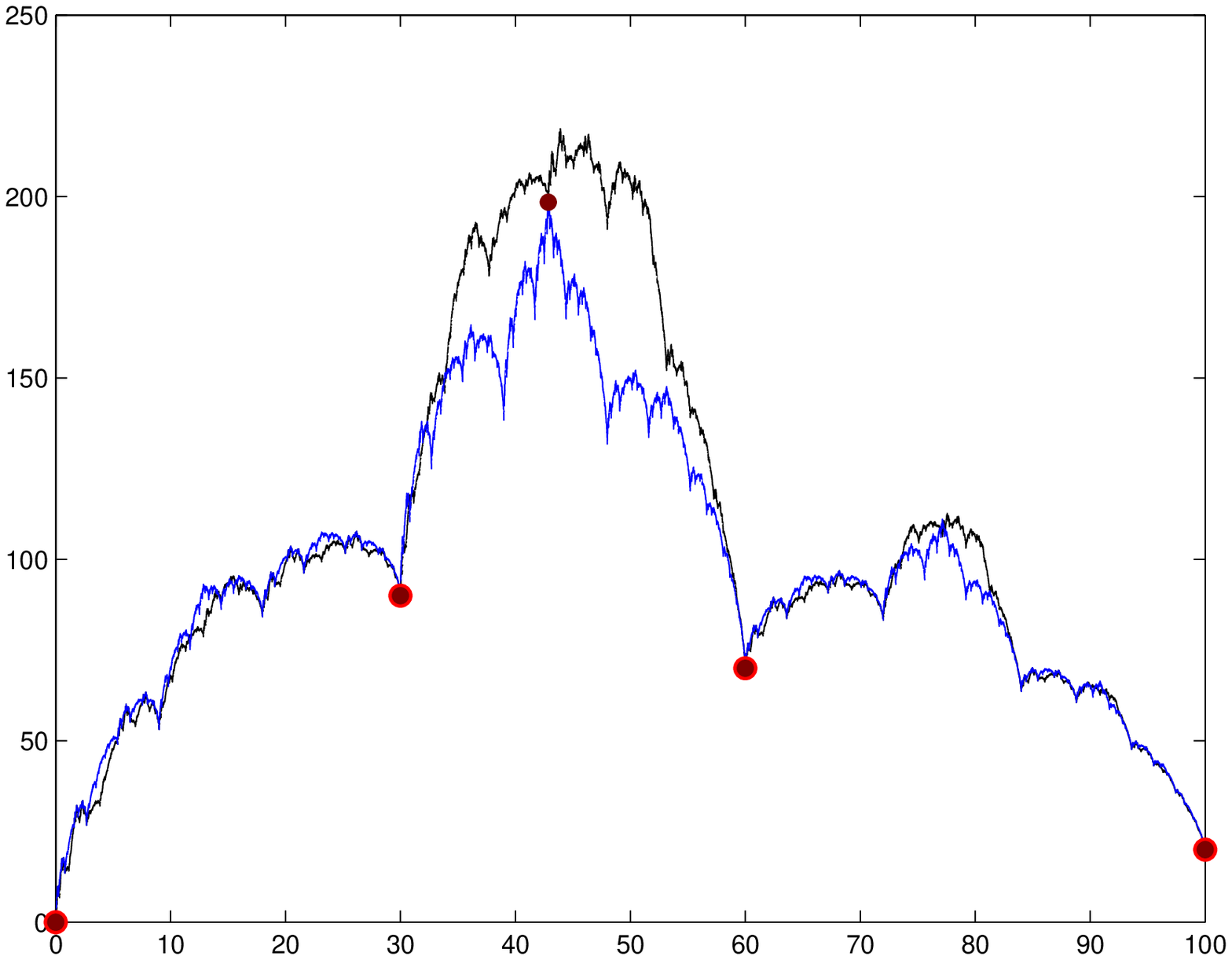, width=6cm}\label{fig:fig3}}
 \hspace{1cm}\subfigure[Knot-Knot insertion]{\epsfig{file=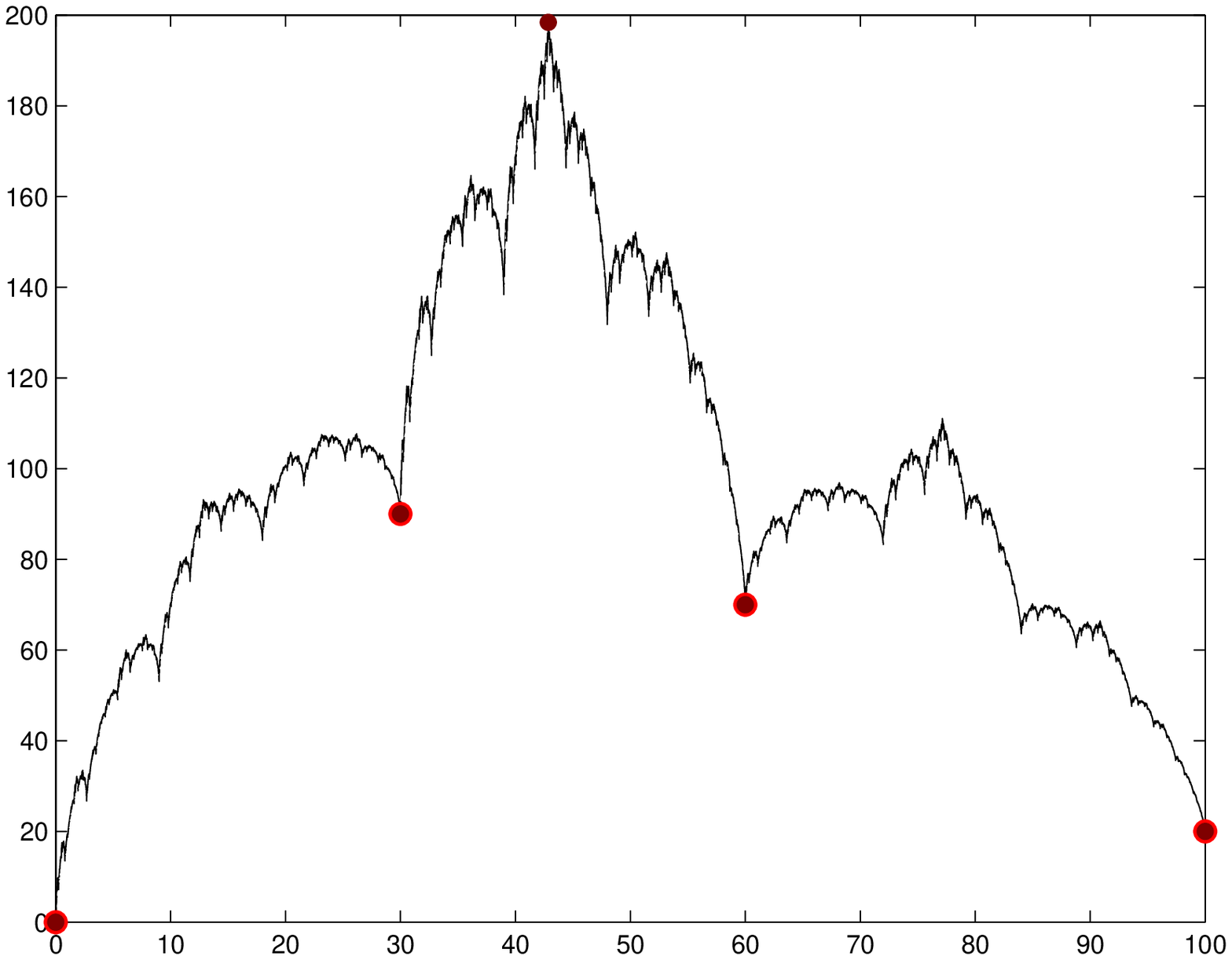, width=6cm}\label{fig:fig4}}}
\end{center}
\end{figure}

\end{document}